  \numberwithin{equation}{section}
\DeclareMathOperator{\inv}{inv}
\DeclareMathOperator{\des}{des}
\DeclareMathOperator{\Des}{Des}
\DeclareMathOperator{\Av}{Av}
\DeclareMathOperator{\einv}{einv}
\DeclareMathOperator{\Inv}{Inv}
\DeclareMathOperator{\cl}{cl}
\theoremstyle{plain}
\newtheorem{thm}{Theorem}[section]
\newtheorem{pro}[thm]{Proposition}
\newtheorem{lem}[thm]{Lemma}
\newtheorem{con}[thm]{Conjecture}
\newtheorem{prob}[thm]{Problem}
\newtheorem{cor}[thm]{Corollary}
\theoremstyle{definition}
\newtheorem{defn}[thm]{Definition}
\newtheorem*{acknow}{Acknowledgments}
\theoremstyle{remark}
\newtheorem{rmk}[thm]{Remark}
\newtheorem{exm}[thm]{Example}
\newcommand{\mc}{{\footnotesize\left[ \begin{array}{c}  m \\
\left\lfloor \frac{|I_{1}|+1}{2}\right\rfloor,\ldots,\left\lfloor \frac{|I_{s}|+1}{2}\right\rfloor \end{array} \right]_{x^2}}}
\newcommand{\mcdes}{\footnotesize{ \left[ \begin{array}{c} \bt+\dt \\
\bf b, \,\bf d \end{array} \right]_{x^2}}}
\newcommand{\mcdesa}{\footnotesize{ \left[ \begin{array}{c} \bt+\dt \\
\bf b, \,\bf d \end{array} \right]_{x^2}}}
\newcommand{\dt}{d}
\newcommand{\bt}{b}
\newcommand{\sm}{\sigma^{-1}}
\newcommand{\N}{\mathbb N}
\newcommand{\PP}{\mathbb P}
\newcommand{\CC}{\mathbb C}
\newcommand{\f}{(-1)^{\ell(\sigma)}x^{L(\sigma)}}
\newcommand{\D}{{\mathcal D}}
\subjclass[2010]{Primary 05A05; Secondary 05A15.}
\begin{document}
	\title{Odd length: odd diagrams and descent classes}
\author{Francesco Brenti and Angela Carnevale} 
\address{Francesco Brenti,	Dipartimento di Matematica 
					Universit\`{a} di Roma ``Tor Vergata''
					Via della Ricerca Scientifica, 1 
					00133 Roma, Italy }
	\email{brenti@mat.uniroma2.it}
\address{Angela Carnevale,  School of Mathematics, Statistics and Applied Mathematics, National University of Ireland, Galway, Ireland}
	\email{angela.carnevale@nuigalway.ie}

\begin{abstract}
We define and study odd analogues of classical geometric and combinatorial objects 
associated to permutations, namely odd Schubert varieties, odd diagrams, and
odd inversion sets. We show that there is a bijection between odd inversion sets 
of permutations and acyclic orientations of the Tur\'{a}n graph, that the
dimension of the odd Schubert variety associated to a permutation is the odd 
length of the permutation, and give several necessary conditions for a subset of 
$[n] \times [n]$ to be the odd diagram of a permutation. We also study 
the sign-twisted generating function of the odd length over descent classes 
of the symmetric groups.
\end{abstract}

\maketitle

\section{Introduction}
Motivated by questions in enumerative geometry a new statistic on the symmetric groups was 
introduced and studied in \cite{KV}.
 This statistic combines combinatorial and parity
conditions and is now known as the odd inversion number 
 or odd length (see, e.g., \cite{BC}). 
 It was conjectured in \cite[Conjecture C]{KV} that the
sign-twisted generating function of this new statistic on any quotient of any symmetric group 
is given by an explicit product formula. This conjecture was 
proved in~\cite{BC}. An odd length statistic has also been defined and studied on the hyperoctahedral groups by Stasinski and Voll in \cite{SV1} and \cite{SV2}, 
on the even hyperoctahedral groups by the authors in~\cite{BC2}, and on 
all Weyl groups in~\cite{BC4} and~\cite{Stem}.

Our purpose in this paper is to carry out a further study of this statistic on the symmetric 
groups from the combinatorial, enumerative and geometric point of view. More precisely, we show that, given any permutation $\sigma$,
there is a complex projective variety $X_o(\sigma)$ (which we call an 
{\em odd Schubert variety}) whose dimension is the 
odd length of $\sigma$. We also define and study the odd analogues of two other familiar combinatorial
objects associated to a permutation, namely  diagrams and inversion sets.
We show that there is a simple transformation connecting odd inversion sets and 
odd diagrams, we characterize the subsets of $[n] \times [n]$ that are
odd inversion sets of permutations, and we give several necessary conditions
for a subset of $[n] \times [n]$ to be the odd diagram of a permutation.
Also, we study the sign-twisted generating function of the odd length over 
 descent classes of the symmetric groups. In particular, we give sufficient
 conditions for the generating function to be zero, and compute it explicitly 
 for the alternating permutations and for a family of descent classes which includes
all quotients.

The organization of the paper is as follows. In the next section we
recall definitions and results that we use in the sequel. In \S~\ref{sec:oddiag} 
we introduce and study odd Schubert varieties, odd inversion sets, and
odd diagrams. More precisely, we show that the odd length of a permutation $\sigma$ is the dimension
of its associated odd Schubert variety, and that this variety depends on a subset of the diagram of $\sigma$ 
(which we call the {\em odd diagram} of $\sigma$). Furthermore, that there is a simple transformation
relating the odd diagram with a subset of the inversion set (which we call the
{\em odd inversion set}) of $\sigma$, that there is a bijection between odd 
inversion sets of permutations and acyclic orientations of the Tur\'{a}n graph,
and give several necessary conditions
for a subset of $[n] \times [n]$ to be the odd diagram of a permutation.
In \S~\ref{sec:operations} we
 study the effect that some operations, that can be performed on a descent class, 
 have on the corresponding sign-twisted generating function of the odd length. 
 In \S~\ref{sec:unmixed} we give sufficient conditions on a descent class for its sign-twisted 
 generating function to be zero and we 
compute it explicitly for the
descent classes of the alternating permutations,
and for a general family of descent classes which includes all quotients.
Finally, in \S~\ref{sec:conj}, we present some conjectures and open problems arising
from the present work.
%a conjecture about the  
%generating function of the number of odd inversions on the symmetric groups, 
%a conjecture about the odd diagrams of permutations avoiding certain patterns,
%and a generalization of the concept of odd inversions for the symmetric groups.

\vspace{1em}

\section{Preliminaries}\label{sec:pre}
In this section we collect some notation and basic facts about symmetric groups that we use in the sequel. 
Besides the combinatorial aspects we recall the geometric facts and definitions about the Schubert variety 
associated with a permutation. 

For $X\subseteq \mathbb N$ we let $X_0$ denote $X\cup\{0\}$. For $m,n \in {\mathbb Z}$, $m\leq n$, 
we let $[m,n]$ denote the set $\{m,m+1,\ldots,n-1,n\}$ and for $n\in \mathbb N$ we let $[n]=[1,n]$. 
Given $J\subseteq [n-1]$ there are unique integers $a_1<\cdots<a_s$ and $b_1<\cdots<b_s$ such 
that $J=[a_1,b_1]\cup\cdots \cup [a_s,b_s]$ and $b_i +1<a_{i+1}$ for $i=1,\dots,s-1$. We call 
the intervals $[a_1,b_1],\ldots,[a_s,b_s]$ the \emph{connected components of $J$}.

For $n \in \N$ we let 
$[n]_q := (1-q^n)/(1-q)$ (so $[0]_q=0 $), and $[n]_q ! := \prod_{i=1}^{n} [i]_q$ (so $[0]_q!= 1$).
For $n_1,\ldots,\,n_k \in \N$ such that $\sum_{i=1}^k n_i =n$ we let
\[
\left[ \begin{array}{c} n \\
n_1,\ldots,n_k \end{array} \right]_{q}:= \frac{[n]_{q}!}{[n_1]_q !\cdot \cdots \cdot [n_k]_q !}.
\]

We refer to \cite{BB} for notation, terminology and basic facts about Coxeter groups. 
The symmetric group $S_n$ is the group of permutations of $[n]$. We let $S=\{s_1,\ldots,s_{n-1}\}$ denote the set of standard generators of $S_n$, where $s_i$ denotes the $i$-th transposition $(i, i+1)$. 
It is well known that $S_n$ is a Coxeter group  with respect to this set of generators and that for $\sigma\in S_n$ the Coxeter length $\ell(\sigma)$ and the descent set $\Des(\sigma)$ have combinatorial interpretations 
\[\ell(\sigma)=|\{(i,j)\in [n]^2 : i<j, \sigma(i)>\sigma(j)\}|\]
and \begin{equation}\label{dessn}
\Des(\sigma)=|\{i\in [n-1] :  \sigma(i)>\sigma(i+1)\}|,
\end{equation}
respectively. In the sequel we often identify the generating set $S$ with the set $[n-1]$.
For $\sigma \in S_n$ the
{\em diagram} of $\sigma$ is
$$
D(\sigma) := \{ (i,j) \in [n]^2 : j<\sigma(i), \sigma^{-1}(j)>i \},
$$
and the {\em inversion set } of $\sigma$ is
$$
\Inv(\sigma) := \{ (i,j) \in [n]^2 : i<j, \, \sigma(i)>\sigma(j) \}.
$$
Note that $|D(\sigma)|= |\Inv(\sigma)| = \ell(\sigma)$, and that 
$(i,\sigma(j)) \in D(\sigma)$ if and only if $(i,j) \in \Inv(\sigma)$, 
for all $(i,j) \in [n]^2 $.

Let $n \in \mathbb N$ and $\{ e_1, \ldots , e_n \}$ be the canonical basis of $\CC^n$.
A {\em flag} in $\CC^n$ is a sequence $(U_1,\ldots , U_{n})$ of subspaces of
$\CC^n$ such that $ U_1 \subset \cdots \subset U_{n}$  and
$\dim(U_i)=i$ for all $i=1, \ldots , n$. The set $F_n$ of all flags in $\CC^n$ is called 
the {\em flag manifold} of $\CC^n$. For $\sigma \in S_n$ we denote by $C(\sigma)$ the
{\em Schubert cell} of $\sigma$. We choose here the definition that is most convenient for our
purposes. Namely, 
$(U_1,\ldots , U_n) \in C(\sigma)$ if and only if there are $(a_{i,j})_{(i,j) \in D(\sigma)} \in \CC^{D(\sigma)}$
such that  
$$
U_k = 
\langle \{e_{\sigma(i)} + \sum_{\{ j : (i,j) \in D(\sigma) \} } a_{i,j} \, e_j  \}_{1 \leq i \leq k} \rangle
$$
for $k \in [n]$. 
It is well known (see, e.g., \cite[(A.4)]{Mac}) and not hard to see, that the  map 
$(a_{i,j})_{(i,j) \in D(\sigma)} \mapsto (U_1,\ldots , U_n)$ is injective. In
particular, $C(\sigma)$ is isomorphic to an affine space of dimension $\ell(\sigma)$.

Recall (see, e.g., \cite[p.\ 209]{GH}) that for $i \in [n]$ the {\em Pl\"{u}cker embedding} $\pi_i$ 
associates to any $i$-dimensional subspace $U$ of $\CC^n$ a point in the projective
space $\PP (\Lambda^i (\CC^n))=\PP^{(^n_{\, i} ) -1}$. More precisely, the image of $U$ under $\pi_i$ is the ${n}\choose{i}$-tuple
$(U_I)_{\{ I \subseteq [n]: |I|=i \}}$ where, for $I \subseteq [n]$, $|I|=i$, $U_I$ is the 
minor whose columns are indexed by the elements in $I$ of a matrix
which has as rows a basis  of $U$. % (where we are using the basis $ \{ e_I\}_{\{ I \subseteq [n]: |I|=i \}}$ of 
%$\Lambda^i (\CC^n)$ given by $ e_J := e_{j_1} \wedge \cdots \wedge e_{j_i}$ if
%$ J=\{ j_1, \ldots , j_i \}_{<}$).

One   may thus associate to any flag $(U_1, \ldots , U_{n})$ of $\CC^n$ a point in the
Cartesian product 
$\PP(\Lambda^1 (\CC^n)) \times \PP(\Lambda^2 (\CC^n)) \times \cdots \times \PP(\Lambda^{n-1} (\CC^n))$.
In turn, to any point in this product the {\em Segre embedding} (see, e.g., 
\cite[Chap.\ I, Ex.\ 2.14]{Har}
for the definition and information about the Segre embedding)
associates a point in the projective space $\PP(E)$
where $ E := \CC^n \otimes \Lambda^2(\CC^n) \otimes \cdots \otimes \Lambda^{n-1}(\CC^n)$.
The image of $F_n$ under this composite embedding (which we denote by $\pi$) 
is a complex projective algebraic variety.

The {\em Schubert variety} 
$X(\sigma)$ is the closure of $\pi(C(\sigma))$ in $\pi(F_n)$. It is well known 
that $X(\sigma)$ is a complex projective variety of dimension $\ell(\sigma)$.

\vspace{3mm}

 One of our results concerns generating functions on descent classes 
 of the symmetric groups, which we now define.
For $I,J\subseteq S$, $J\subseteq S \setminus I$ we let
\begin{eqnarray}\label{descl}
\D _J ^I(S_n) &:=&\{\sigma \in S_n \,|\, J\subseteq \Des(\sigma)\subseteq S\setminus I\}, \\ 
\label{quodes}
S_n^I  &:=& \D_{\emptyset} ^{I}(S_n).
\end{eqnarray}
Similarly, for   subsets $X\subseteq S_n$, we denote $\D _J ^I (X):=X\cap \D_J ^I (S_n)$.

To state the main result of~\cite{BC},  which is  a special   case  of
one of our main  results, we need the following definitions. 
%%we should rephrase here "special" and "the main result"
Let $n\in \N$. Set:
\begin{align*}
C_{n,\,+} &:=  \{\sigma\in S_n : i+ \sigma(i)\equiv 0 \pmod{2},\;  i=1,\ldots,\,n \}, \\
C_{n,\,-} &:=  \{\sigma\in S_n : i+ \sigma(i)\equiv 1 \pmod{2},\;  i=1,\ldots,\,n \}, \\
C_n       &:=  C_{n,\,+} \cup C_{n,\,-}.
\end{align*}
Note that
\[
C_n = \{ \sigma \in S_n  : i \equiv j \pmod{2} \Rightarrow  \sigma(i) \equiv \sigma(j) \pmod{2}, \mbox{ for all } 
i,j \in [n] \}.
\]
Elements in $C_{n,+}$ are called \emph{even} chessboard elements,
those in $C_{n,-}$  \emph{odd} chessboard
elements. Informally,  in an even chessboard element all the
values  agree in parity with their positions. In an odd chessboard  element in  every position there is a value of opposite parity.
For $n=2m+1$, clearly $C_{n,\,-}=\emptyset$ and therefore $C_n = C_{n,\,+}$.

Note that the chessboard elements $C_n$ form a subgroup of $S_n$ and that
the even chessboard  elements $C_{n,\,+}$ form a subgroup of $C_n$.

The odd length is defined as follows (see also \cite{KV} and \cite{BC}).
\begin{defn}Let $n\in \mathbb N$ and $\sigma \in S_n$. The \emph{odd length} of $\sigma$ is
\begin{equation}\label{olA}
L(\sigma):= |\{(i,j)\in [n]^{2} : i<j,\,\sigma(i)>\sigma(j),\,i\not\equiv j \pmod{2}\}|.
\end{equation}
\end{defn}
Informally, the statistic $L$ counts inversions between values in
positions with opposite parity.  In the next proposition we collect some properties satisfied by $L$.
\begin{pro}\label{baspro}
Let $n\in \mathbb N$ and  let $w_0$ be the unique longest element of $S_n$. Then
\begin{enumerate}
\item[(i)] $L(e)=0$,
\item[(ii)] $L(s_i)=1,$ for $i=1,\ldots,n-1$,
\item[(iii)] $L(\sigma w_0)=L(w_0 \sigma)=L(w_0)-L(\sigma)$ for all $\sigma \in S_n$, 
\item[(iv)] $w_0$ is the unique element on which $L$ attains its
  maximum,  and $L(w_0)=\left\lfloor \frac{n}{2}\right\rfloor\left\lceil\frac{n}{2} \right\rceil$.
\end{enumerate}
\end{pro}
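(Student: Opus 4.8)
The plan is to prove the four parts of Proposition \ref{baspro} directly from the combinatorial definition \eqref{olA} of the odd length $L$, using as the single nontrivial engine the relationship between $L$ and the full Coxeter length $\ell$. First, parts (i) and (ii) are immediate from the definition: for the identity $e$ there are no inversions at all, so $L(e)=0$; and for the adjacent transposition $s_i=(i,i+1)$ the only inversion is the pair $(i,i+1)$, whose coordinates have opposite parity, so it is counted by $L$, giving $L(s_i)=1$.

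The heart of the matter is part (iii), and I would isolate it as the key lemma. The idea is that multiplying by the longest element $w_0$ (which sends $k\mapsto n+1-k$) should reverse inversions in a parity-controlled way. I would compute the effect on the set of pairs counted by $L$. Recall that $L(\sigma)$ counts pairs $(i,j)$ with $i<j$, $\sigma(i)>\sigma(j)$, and $i\not\equiv j\pmod 2$. For right multiplication $ww_0$, we have $(ww_0)(i)=w(n+1-i)$, so a pair $(i,j)$ is an $L$-inversion of $ww_0$ exactly when $(n+1-i,n+1-j)$ is a \emph{non}-inversion of $w$; and crucially $i\not\equiv j$ iff $(n+1-i)\not\equiv(n+1-j)$, so the opposite-parity condition is preserved under $k\mapsto n+1-k$. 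Hence, summing over all opposite-parity pairs, the $L$-inversions of $ww_0$ are precisely the complementary opposite-parity pairs to those of $w$, which gives $L(ww_0)=L(w_0)-L(w)$ once we know the total number of opposite-parity pairs equals $L(w_0)$. The argument for $w_0 w$ is symmetric, now using that $w_0 w$ negates values, $(w_0 w)(i)=n+1-w(i)$, so $(w_0 w)(i)>(w_0 w)(j)\iff w(i)<w(j)$, again flipping inversions while the parity condition on positions $i,j$ is untouched.

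For part (iv), I would first establish the formula $L(w_0)=\lfloor n/2\rfloor\lceil n/2\rceil$ by directly counting the opposite-parity pairs $(i,j)$ with $i<j$: since $w_0$ reverses the order, \emph{every} pair with $i<j$ is an inversion, so $L(w_0)$ is just the number of pairs $i<j$ in $[n]$ with $i\not\equiv j\pmod 2$, i.e.\ one odd and one even coordinate, which is exactly (number of odd positions)$\times$(number of even positions)$=\lceil n/2\rceil\lfloor n/2\rfloor$. The uniqueness and maximality then follow from part (iii): since $L(ww_0)=L(w_0)-L(w)\ge 0$ for all $w$, we get $L(w)\le L(w_0)$ for all $w$, and equality forces $L(ww_0)=0$; but $L(\tau)=0$ implies $\tau$ has no opposite-parity inversions, and I would argue that the only permutation with this property whose product with $w_0$ could realize the maximum is $w=w_0$ (equivalently $L$ vanishes only at $e$ among the relevant elements, so $ww_0=e$).

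The main obstacle I anticipate is the bookkeeping in part (iii): one must verify cleanly that the bijection $k\mapsto n+1-k$ preserves the opposite-parity condition $i\not\equiv j\pmod 2$, and that it does so \emph{regardless} of the parity of $n$. This is where a case split on $n$ even versus odd may be needed, since $n+1-k$ changes the parity of an individual index by a fixed amount depending on $n$, even though it preserves the \emph{relation} $i\equiv j$. Getting this invariance stated correctly, and matching it with the total count from part (iv), is the delicate step; everything else reduces to the elementary inversion-counting already recalled in the definition of $\ell$ and $L$.
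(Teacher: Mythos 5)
Your proposal is correct and follows essentially the same route as the paper, which treats (i)--(iii) as immediate and derives (iv) from (iii) together with a direct count of $L(w_0)$ and the fact that $e$ is the unique zero of $L$ (a fact you, like the paper, leave implicit; it is immediate since any $\tau\neq e$ has a descent at some position $i$, and $(i,i+1)$ is then an opposite-parity inversion). The case split on the parity of $n$ that you worry about in (iii) is unnecessary: $(n+1-i)-(n+1-j)=j-i$, so $k\mapsto n+1-k$ preserves the relation $i\equiv j\pmod 2$ regardless of $n$, exactly as you observe.
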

\begin{proof}
The only non-trivial point is the last one. It follows from (iii) and
the fact that the identity  is the unique element on which $L$ is zero. The last statement comes from the fact that, by definition, 
$L(w_0) = \sum_{i=1}^{n-1} \lceil \frac{i}{2} \rceil $.\qedhere
\end{proof}

The following result, conjectured in~\cite{KV} and proved
in~\cite{BC},  gives explicit product formulas for the sign-twisted
distribution of the odd length   on all quotients of the symmetric groups.

\begin{thm}
Let $n \in {\mathbb N}$, $I \subseteq [n-1]$, and $I_{1}, \ldots , I_{s}$ be the connected components of $I$. Then
 \begin{align}
 \sum_{\sigma \in S_{n}^{I}} (-1)^{\ell (\sigma )} x^{L(\sigma )} &=\left\{ \begin{array}{l}\mc 
{\displaystyle \prod_{k=b+1}^{\left\lfloor \frac{n-1}{2} \right\rfloor }}(1-x^{2k}), \\
 \mbox{if $n \equiv 1 \pmod{2}$, or if $n = 2 b$,} \\ 
 (1+x^{m}) \mc {\displaystyle \prod_{k=b+1}^{\left\lfloor \frac{n-1}{2} \right\rfloor }}(1-x^{2k}), \\
  \mbox{otherwise,}\end{array} \right.  
 \end{align}
where $b := \sum_{k=1}^{s} \left\lfloor \frac{|I_{k}|+1}{2} \right\rfloor $.
\end{thm}

More precisely, the following result is what is proved in \cite{BC}.

\begin{thm}\label{thmA}
Let $n\in {\mathbb N}$, $I \subseteq [n-1]$, and $I_{1},\ldots , I_{s}$ be the connected components of $I$. Then
\begin{align}
\sum_{\sigma \in \D_{\emptyset}^{I}(C_{n,\, +})}(-1)^{\ell (\sigma )}x^{L(\sigma )} &=
{\footnotesize\left[ \begin{array}{c}  b \\
\left\lfloor \frac{|I_{1}|+1}{2}\right\rfloor,\ldots,\left\lfloor \frac{|I_{s}|+1}{2}\right\rfloor 
\end{array} \right]_{x^2}} \prod_{k=b +1}^{\left\lfloor \frac{n-1}{2}\right\rfloor} (1-x^{2k}) \,,
\label{c2mp} 
\end{align}
and
\begin{align}
\sum_{\sigma \in \D_{\emptyset}^{I}(C_{n,\, -})}(-1)^{\ell (\sigma )}x^{L(\sigma )} &=
\begin{cases}
  0, & \mbox{if $n \equiv 1 \pmod{2} $, or} \\
  &  \mbox{if $n \equiv 0 \pmod{2} $ and $ b=m$}\\
-x^{m} \; {\displaystyle \sum_{\sigma \in \D_{\emptyset}^{I}(C_{n,\, +})}}(-1)^{\ell (\sigma )}x^{L(\sigma )}, & 
\mbox{otherwise,}
\end{cases}  
\label{c2mm} 
\end{align}
% if $n \equiv 0 \pmod{2} $,
where $b := \sum_{k=1}^{s} \left\lfloor \frac{|I_{k}|+1}{2}
\right\rfloor$, and $m:=\lfloor \frac{n}{2}\rfloor$.
\end{thm}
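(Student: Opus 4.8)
The plan is to reduce the statement to the product structure of the chessboard group and then to a sign-reversing involution. First I would record how the length splits. If $\sigma\in C_{n,+}$ then $\sigma(i)\equiv i\pmod 2$, so $\sigma$ permutes the odd values among the odd positions and the even values among the even positions; writing $\alpha\in S_p$ and $\beta\in S_q$ for the two resulting permutations (where $p=\lceil n/2\rceil$ and $q=\lfloor n/2\rfloor$), every inversion of $\sigma$ between positions of equal parity is an inversion of $\alpha$ or of $\beta$, while the inversions between positions of opposite parity are exactly those counted by $L$. Hence $\ell(\sigma)=L(\sigma)+\ell(\alpha)+\ell(\beta)$ and $(-1)^{\ell(\sigma)}x^{L(\sigma)}=(-1)^{\ell(\alpha)+\ell(\beta)}(-x)^{L(\sigma)}$. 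This rewrites the left-hand side of \eqref{c2mp} as a doubly alternating sum over $S_p\times S_q$ of $(-x)^{L}$, and the constraint defining $\D_\emptyset^{I}$ becomes the requirement that $\sigma$ be increasing along each block $[a_k,b_k+1]$ cut out by the connected components of $I$.

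Next I would cancel terms by a sign-reversing involution. Interchanging the values at two consecutive odd positions $2a-1,2a+1$ (equivalently, an adjacent transposition of $\alpha$) keeps $\sigma$ in $C_{n,+}$ and flips exactly one of $(-1)^{\ell(\alpha)},(-1)^{\ell(\beta)}$; a direct computation shows that it preserves the cross-inversion number $L$ precisely when the two swapped (odd) values lie on the same side of the (even) value occupying the intermediate position $2a$, and symmetrically for even positions. Selecting the first admissible such swap by a fixed scanning order --- and only among swaps that do not violate the forced ascents imposed by $I$ --- yields a sign-reversing, $L$-preserving involution on $\D_\emptyset^{I}(C_{n,+})$. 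The crux is then to describe its fixed points: these are the permutations that are maximally rigid, and I expect them to be indexed by a shuffle of the forced-increasing blocks together with a subset $S$ of the unconstrained top values. The shuffles contribute, in the variable $x^2$, the Gaussian multinomial on the right-hand side of \eqref{c2mp}, the part $\lfloor(|I_k|+1)/2\rfloor$ being the number of equal-parity slots inside the $k$-th block; each unconstrained value at level $k$ contributes a factor $(1-x^{2k})$ through the identity $\prod_{k=\mt+1}^{\lfloor(n-1)/2\rfloor}(1-x^{2k})=\sum_{S}(-1)^{|S|}x^{2\sum_{k\in S}k}$, with the sign supplied by the parity of the length and the exponent by the $L$-cost of promoting that value. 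Matching the two contributions gives \eqref{c2mp}.

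For the odd chessboard elements I would treat the two cases of \eqref{c2mm} separately. When $\mt=m$ I expect that $\D_\emptyset^{I}(C_{n,-})$ is in fact empty: the forced ascents together with the parity reversal (now every value has parity opposite to its position, so the global minimum $1$ must sit at an even position) leave no admissible placement, and the sum is $0$. When $\mt<m$ I would run the length-splitting and the same involution for $C_{n,-}\cong S_m\times S_m$; the bookkeeping is identical except that the extreme values now occupy the opposite parity class, which rigidly adds $m$ to every cross-inversion count and reverses the overall sign of the surviving contribution. Consequently the generating function is that of \eqref{c2mp} multiplied by $-x^m$, as claimed.

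The main obstacle is the coupling between the two parity classes that $L$ introduces: unlike $\ell(\alpha)+\ell(\beta)$, the cross-inversion statistic does not factor, so the delicate steps are (i) verifying that the proposed involution genuinely preserves $L$ --- this reduces to the ``same side of the intermediate value'' condition isolated above --- and (ii) identifying the fixed points and their weights, i.e.\ disentangling the forced-increasing blocks (which yield the $x^2$-multinomial) from the free top values (which yield the product). For \eqref{c2mm} the analogous difficulty is to pin down rigorously the uniform shift by $x^m$ and the sign reversal, and to establish the emptiness at $\mt=m$.
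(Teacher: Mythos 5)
First, a point of reference: the paper does not actually prove Theorem \ref{thmA}; it quotes it from \cite{BC}. The argument there (which is mirrored in the proof of Theorem \ref{mainchess} below, of which Theorem \ref{thmA} is the case $J=\emptyset$) proceeds by induction on the number of connected components, using shifting and compressing moves as in Propositions \ref{scr} and \ref{scl} to reduce to the compressed case, and then conditioning on the position of the value $n$ to obtain a Pascal-type recurrence for the $x^2$-multinomial. Several of your preliminary observations are correct and in the right spirit: the splitting $\ell(\sigma)=L(\sigma)+\ell(\alpha)+\ell(\beta)$ on $C_{n,+}$ is valid, your local criterion for when swapping the values in positions $2a-1$ and $2a+1$ preserves $L$ (namely that both swapped values lie on the same side of the value in position $2a$) is correct, and in the case $\mt=m$ the set $\D_{\emptyset}^{I}(C_{n,-})$ is indeed empty: $\mt=m$ forces $I$ to contain every odd element of $[n-1]$, so the even value $n$, which in $C_{n,-}$ must occupy an odd position, would have to be followed by a larger value, a contradiction.

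However, the core of your argument has genuine gaps. (i) The involution is not well defined as described: selecting ``the first admissible swap in a fixed scanning order'' need not give an involution, because performing the swap at $(2a-1,2a+1)$ changes the value in position $2a-1$ and can make a swap at the earlier pair $(2a-3,2a-1)$ admissible when it previously was not (or vice versa), so the map applied twice may not return to $\sigma$. You would need to prove stability of the selected swap under its own application, and that is precisely the delicate point. Moreover the fixed-point set is only conjectured (``I expect them to be indexed by\dots''), and the computation extracting the $x^2$-multinomial and the factor $\prod_{k=\mt+1}^{\lfloor (n-1)/2\rfloor}(1-x^{2k})$ from the fixed points is asserted rather than carried out; this is where essentially all of the content of \eqref{c2mp} lies. (ii) For \eqref{c2mm} with $\mt<m$, the claim that passing to $C_{n,-}$ ``rigidly adds $m$ to every cross-inversion count and reverses the sign'' requires an explicit bijection $\D_{\emptyset}^{I}(C_{n,+})\to\D_{\emptyset}^{I}(C_{n,-})$ with $L\mapsto L+m$ and $\ell$ changing parity. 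The obvious candidate, left multiplication by $s_1s_3\cdots s_{n-1}$, does not preserve descent classes (it turns the ascent $\ldots,3,4,\ldots$ into the descent $\ldots,4,3,\ldots$), so a genuine construction is still needed. As it stands, the proposal is a plausible programme rather than a proof.
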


%Our purpose in this work is to extend Theorem \ref{thmA} to descent classes.

\vspace{1em}

\section{Odd diagrams, odd inversion sets, and odd Schubert varieties}\label{sec:oddiag}

In this section we define and study odd analogues of familiar 
combinatorial and geometric objects which are associated to permutations.
More precisely, we define and study odd diagrams, odd Schubert varieties, and odd inversion sets. In particular, we
give a geometric interpretation of the odd length function
$L : S_n \rightarrow \mathbb N_0 $.

Let $n \in \mathbb N$ and $\sigma \in S_n$. 
We define the {\em odd diagram} of $\sigma$ to be 
$$
D_o(\sigma) := \{ (i,j) \in D(\sigma) : \sigma^{-1}(j) \not \equiv i \pmod{2} \}.
$$
Clearly $D_o(\sigma)\subset D(\sigma)$ for all $\sigma\in S_n$. Also, note that $|D_o(\sigma)| = L(\sigma)$.

Let 
$(U_1,\ldots , U_n) \in C(\sigma)$  and$(a_{i,j})_{(i,j) \in D(\sigma)} \in \CC^{D(\sigma)}$ 
be the corresponding set of complex numbers (see \S~2). 
We define the {\em odd Schubert cell} of $\sigma$ to be 
\[
C_o(\sigma) := \{ (U_1,\ldots , U_n) \in C(\sigma) : a_{i,j}=0 \mbox{ if } (i,j) \in D(\sigma) \setminus D_o(\sigma) \}.
\]
So if $(U_1,\ldots , U_n) \in C_o(\sigma)$ then there are 
$(a_{i,j})_{(i,j) \in D_o(\sigma)} \in \CC^{D_o(\sigma)}$ such that 
$$
U_k = 
\langle \{e_{\sigma(i)} + \sum_{\{ j : (i,j) \in D_o(\sigma) \} } a_{i,j} \, e_j  \}_{1 \leq i \leq k} \rangle
$$
for all $k \in [n]$, and all flags of this form are in $C_o(\sigma)$.
In particular, $C_o(\sigma)$ is isomorphic to an affine space of dimension~$L(\sigma)$.
We then define the {\em odd Schubert variety} $X_o(\sigma)$ associated with $\sigma$ to be the closure of 
$\pi(C_o(\sigma))$ in $\pi(F_n)$,
where $\pi$ is the embedding defined in \S~\ref{sec:pre}.
The next result then follows from standard facts (see, e.g., \cite[Chap.\ I, Ex.\ 2.17]{Har}).
\begin{pro}
Let $\sigma \in S_n$. Then
$X_o(\sigma)$ is a complex projective variety of dimension $L(\sigma)$.
\end{pro}

We illustrate our definitions with an example.
\begin{exm}Let $\sigma :=[ 4,2,3,1]$. Then $C_o(\sigma)$ consists
of all flags $(U_1,U_2,U_3,U_4) \in F_4$ for which there are complex numbers $a,b,c \in \CC$ such that
$
U_1 = 
\langle \{e_{4} +a e_2 + b e_1 \} \rangle
$,
$
U_2 = 
\langle \{e_{4} +a e_2 + b e_1, e_2 \} \rangle
$,
and 
$
U_3 = 
\langle \{e_{4} +a e_2 + b e_1, e_2, e_3 + c e_1 \} \rangle
$. 
The Pl\"{u}cker coordinates of these subspaces are, respectively, $(b,a,0,1)$, 
$(b,0,0,0,-1,0)$, and $(b,-c,0,1)$. The Segre embedding of this triple of points is,
after removing 0's,
\begin{equation}
\label{OddSchuex}
(b^3,-b^2,ab^2,-abc,b^2,-b,-cb^2,cb,-ab,ac,-bc,c,b^2,-b,ab,-a,b,-1).
\end{equation}
Therefore, the odd Schubert cell $C_o([4,2,3,1])$ may be identified with all the points in 
$\PP(\CC^{18})$ of the form (\ref{OddSchuex}) where $a,b,c \in \CC$.

\end{exm}
It is easy to characterize the permutations for which the diagram and the odd diagram coincide.
\begin{pro}\label{pro:coincide}
  Let $\sigma\in S_n$. We have $D(\sigma)=D_o(\sigma)$  if and only if $\sigma(k-2)<\sigma(k)$ for all $3\leq k\leq n$.
\end{pro}
The following is an immediate consequence.
\begin{cor}
  Let $\sigma\in S_n$. The odd Schubert variety $X_o(\sigma)$
  coincides with the  Schubert variety $X(\sigma)$ if and only if
  $\sigma(k-2)<\sigma(k)$  for all $k\in[3,n]$.
\end{cor}

  For $n\in\mathbb N$, there are
  $\displaystyle\binom{n}{\left\lfloor\frac{n}{2}\right\rfloor}$  
  permutations of degree  $n$ satisfying the last property, and thus
  for which $L$ and $\ell$  coincide; see also the
  sequence~\cite[A001405]{Oeis}. We write  $$G_n:=\{\sigma\in S_n :
  \sigma(k-2)<\sigma(k) \mbox{ for all }3\leq k\leq n\}=\{\sigma\in
  S_n : \ell(\sigma)=L(\sigma)\}$$  for the set of permutations for  which all inversions are odd inversions. 
In particular, the previous proposition implies that for permutations
in $G_n$ the odd diagram ``faithfully''  encodes the permutation
itself. It would be interesting to characterize,  for every $n$, the
largest subset of $S_n$ on which the  map associating to a permutation
its odd diagram is injective.  In \S~\ref{sec:conj} we put forward a conjecture  in
this direction. Now,  inspired by
Proposition~\ref{pro:coincide}, we prove a product formula  for the
distribution of the difference of  length and odd length, namely of
the {\em even inversions} $\einv$  over the symmetric groups.
\begin{pro}\label{pro:einv}
  Let $n\in\mathbb N$. Then
  \[\sum_{\sigma\in S_n}x^{\ell(\sigma)-L(\sigma)}=\sum_{\sigma\in
      S_n}x^{\einv(\sigma)}=
    \binom{n}{\left\lceil{\frac{n}{2}}\right\rceil} \prod_{i=1}^{n} 
    \left(\frac{1-x^{\lceil{\frac{i}{2}}\rceil}}{1-x}\right).\]
  \end{pro}
  \begin{proof}
    Consider the subgroups $$S_o:=\left\langle(i,i+2):i\equiv 1\!\pmod
      2\right\rangle\simeq  S_{\left\lceil\frac{n}{2}\right\rceil}$$
    and  $$S_e:=\left\langle(i,i+2):i\equiv 0\!\pmod 2\right\rangle\simeq S_{\left\lfloor{\frac{n}{2}}\right\rfloor}.$$
    
  Let $\pi_o$ and $\pi_e$ denote the natural projections from $S_n$ onto $S_o$
  and $S_e$,  respectively. It is easy to see that
  $\einv(\sigma)=\inv(\pi_o(\sigma))+\inv(\pi_e(\sigma))$.  Therefore,
  \begin{equation}\label{eq:poinceven}\sum_{\sigma\in S_o\times S_e}
    x^{\einv(\sigma)}= \left(\sum_{\sigma\in
        S_{\left\lceil{\frac{n}{2}}\right\rceil}}
      x^{\inv(\sigma)}\right)\left(\sum_{\sigma\in
        S_{\left\lfloor{\frac{n}{2}}\right\rfloor}}
      x^{\inv(\sigma)}\right)=\prod_{i=1}^{n}\left(\frac{1-x^{\lceil{\frac{i}{2}}\rceil}}{1-x}\right).
  \end{equation}
  The proposition follows, as $$S_n=\dot\bigcup_{\tau\in G_n} \tau (S_o\times S_e)$$
  and the identity in~\eqref{eq:poinceven} also holds on all of the
  $\displaystyle\binom{n}{\left\lfloor{\frac{n}{2}}\right\rfloor}$  cosets.
\end{proof}
The following is a straightforward corollary of this formula.
\begin{cor}
    The polynomial $\sum_{\sigma\in S_n}x^{\einv(\sigma)}$ is symmetric and unimodal for all $n\in \mathbb N$.
  \end{cor}

\begin{rmk}\label{rmk:modinv}
 The proof of Proposition~\ref{pro:einv} shows that similar results
 hold for the polynomials  giving the distribution of inversions between positions which are congruent modulo 
any positive integer. More precisely, for $k,n\in \mathbb N$ and
 $\sigma\in S_n$,  let $\inv_{k,0}(\sigma)$  denote the number of
 inversions between positions congruent  modulo~$k$ in $\sigma$,
\begin{equation}
\label{eq:zeromod}
      \inv_{k,0}(\sigma)=|\{(i,j)\in \Inv(\sigma) : \, j-i \equiv 0 \pmod k\}|.
\end{equation}
If $n=mk+r$ for some $m\in\mathbb N_0$ and $0 \leq r <k$ then 
\begin{eqnarray*}
\sum_{\sigma\in S_n}x^{\inv_{k,0}(\sigma)} &=&
\binom{n}{{\underbrace{\scriptsize m, \ldots ,m}_{k-r}},{\underbrace{\scriptsize m+1, \ldots ,m+1}_{r}}}\left( \sum_{\sigma\in S_m}x^{\inv(\sigma)} \right)^{k-r}
\left( \sum_{\sigma\in S_{m+1}} x^{\inv(\sigma)} \right)^{r} \\
&=& \binom{n}{{\underbrace{\scriptsize m, \ldots ,m}_{k-r}},{\underbrace{\scriptsize m+1, \ldots ,m+1}_{r}}}
\prod_{i=1}^{n} \left[ \left\lceil \frac{i}{k} \right\rceil  \right]_x.
\end{eqnarray*}
\end{rmk}

\vspace{3mm}

 It is clear that the odd Schubert variety attached to a permutation $\sigma \in S_n$ 
does not really depend on the permutation, but only on its odd diagram.
While it is well known that  diagrams are in bijection with
permutations,  this is not the case for odd diagrams. Indeed, already
in $S_3$  there are two permutations with the same odd diagram: $D_o([2,1,3])=\{(1,1)\}=D_o([3,1,2])$. 
It is therefore a natural and interesting problem to characterize
odd diagrams of permutations. This is probably not an easy task since
no characterization of (ordinary) diagrams seems to be known. 
Also, odd diagrams are probably not equivalent to any known combinatorial objects.
In fact, if we let $o_n$ be the number of different odd diagrams of permutations in $S_n$
then the first values of the sequence $\{o_n\}_{n\in \mathbb N}$ are $1,2,5,17,70,351,2041,13732$, and this sequence does not appear in the
OEIS (\cite{Oeis}).

It is easy to see that to characterize subsets of $[n]^2$ which are odd diagrams of a permutation it is enough to consider those that have at least one element in the first row or column.

\begin{pro}\label{pro:reduction}
  Let $S\subseteq [2,n]^2$. Then there exists $\sigma\in S_n$ such that $D_o(\sigma)=S$ if and only if there exists $\tau \in S_{n-1}$ such that $D_o(\tau)=\{(x-1,y-1): (x,y)\in S\}.$ 
\end{pro}
\begin{proof}
  Suppose there is $\sigma\in S_n$ such that $S=D_o(\sigma)$. Then $\sigma^{-1}(1)=1$ (otherwise 
  $(\sigma^{-1}(1)-1,1)\in D_o(\sigma)$). So $\tau=[\sigma(2)-1,\dots,\sigma(n)-1]\in S_{n-1}$ is the desired permutation.
  Conversely, if $\tau\in S_{n-1} $ is such that $D_o(\tau)=\{(x-1,y-1): (x,y)\in S\}$ then $\sigma=[1,\tau(1)+1,\dots,\tau(n-1)+1]$ has odd diagram equal to $S$.
  \end{proof}
  Note that the previous proof implies that if an odd diagram does not have any elements in the first column, then it has no elements in the first row.
  
The following result gives some necessary enumerative conditions for a subset of
$[n]^2$ to be the odd diagram of a permutation in $S_n$.
\begin{pro}\label{pro:oddiagnec}
Let $\sigma \in S_n$ and $S := D_o(\sigma)$. Then:
\begin{itemize}
\item[(i)] if $(i,j) \in S$ then $| \{ k \in [j-1] :  (i,k) \notin S \}| \leq 
\min\{\lfloor \frac{n+i-2}{2} \rfloor,j-1\}$;
\item[(ii)] if $(i,j) \in S$ then $| \{ k \in [i-1] :  (k,j) \notin S \}| \leq 
\min\{\lceil \frac{i+2j-3}{2} \rceil, i-1\}$;
\item[(iii)] if $i \in [n]$ then $| \{ j \in [n] :  (i,j) \in S \}| \leq \lceil \frac{n-i}{2} \rceil$;
\item[(iv)] if $j \in [n]$ then $| \{ i \in [n] :  (i,j) \in S \}| \leq 
  \min \{ \lceil \frac{n-1}{2} \rceil, n-j \}$.
\end{itemize}
\end{pro}
\begin{proof}

Suppose that $(i,j) \in S$. Let $k \in [j-1]$ be such that $(i,k) \notin S$.
Then either $\sigma^{-1}(k)<i$ or $\sigma^{-1}(k)>i$ and
$\sigma^{-1}(k) \equiv i \pmod{2}$.  But there are at most $i-1$
possibilities in  the first case, and at most $\lfloor \frac{n-i}{2}
\rfloor$ in the second case. This proves~(i).

Similarly, let $(i,j) \in S$, and $k \in [i-1]$ be such that $(k,j) \notin S$.
Then either $\sigma(k)<j$ or $ k \not \equiv i \pmod{2}$, and there are 
at most $j-1$ possibilities in the first case and at most 
$\lceil \frac{i-1}{2} \rceil$ in the second one. 

Finally, if $i,j \in [n]$ are such that $(i,j) \in S$ then $\sigma^{-1}(j) >i $ and 
$\sigma^{-1}(j) \not \equiv i \pmod{2}$, which proves (iii). The proof of (iv) is analogous and
is omitted.
\qedhere
\end{proof}

\begin{figure}
      \begin{tikzpicture}[scale=0.3]
        \draw (1,15) rectangle (15,16);
        \draw (1,13) rectangle (15,14);
        \draw (1,9) rectangle (15,10);
       \node [align=left] at (8.2,12) {${\bf \vdots}$ };
        \draw (16,3) rectangle (15,4);
        \draw (16,15) rectangle (15,16);
        \draw (16,13) rectangle (15,14);
        \draw (16,9) rectangle (15,10);
        \draw (1,3) rectangle (15,4);
        \node at (8,9.5) {$\cdots \star \cdots$};
        \node at (8,3.5) {$\cdots \star \cdots$};
        \node at (3+5,15.5) {$\cdots \star \cdots$};
        \node at (3+5,13.5) {$\cdots \star \cdots$};
        \node at (15.5,3.5) {$\star$};
        \node at (0,15.5) {\tiny $i_1$};
        \node at (0,13.5) {\tiny $i_2$};
        \node at (0,9.5) {\tiny $i_k$};
        \node at (0,3.5) {\tiny $i_{k+1}$};
        \draw (1.5,2 ) -- (1.5,3.5 );
        \node at (1.6,1.5) [align=left] {\tiny column $j$};
        \node at (26.3,2) [align=center]{ $\star$ \tiny$ =$ element of the   odd diagram};
        \draw [decorate,decoration={brace,amplitude=8pt},xshift=-4pt,yshift=0pt]
        (-0.88,3.5) -- (-0.88,15.5) node [text width=3cm,align=left,black,midway,xshift=-1.6cm] 
        { \tiny$ k+1$ congruent rows  with at least one star  in $[j,j+k-1]$ in the first $k$ rows};
         \draw [decorate,decoration={brace,amplitude=8pt},xshift=-4pt,yshift=0pt] (16.5,15.5)--(16.5,9.5)  node [text width=4cm,align=left,black,midway,xshift=3cm] {\tiny no star in positions \\ $(i_1,j+k),\dots,(i_k,j+k)$};
         \draw [decorate,decoration={brace,amplitude=8pt},xshift=0pt,yshift=-3pt]
        (1.5,17) -- (14.5,17) node [align=center,black,midway,yshift=.6cm] 
        { \tiny $ k$};
  \end{tikzpicture}
\caption{A forbidden configuration for odd diagrams}\label{fig:forbidden}
\end{figure}

The next proposition collects a number of configurations which cannot occur in odd diagrams (see also Figures~\ref{fig:forbidden} and \ref{fig:f2}).
\begin{pro}\label{pro:forbidden}
  Let $\sigma \in S_n$ and $S := D_o(\sigma)$. Then:
  \begin{itemize}
\item[(i)] if $(i,j),(k,l) \in S$ with $i \leq k$, $j \geq l$, and $i \equiv k \pmod{2}$ 
then $(i,l) \in S$;
\item[(ii)] if $(i,j),(k,l) \in S$ with $i<k$, $j \geq l$, and $i \not \equiv k \pmod{2}$ then $(i,l) \notin S$;
\item[(iii)] if $ \{ i_1, \ldots , i_{k+1} \}_< \subseteq [n]$ and $j \in [n]$ are 
such that $i_r \equiv i_{r+1} \pmod{2}$, 
$\{ i_r \} \times [j,j+k-1] \cap S \not = \emptyset$,
and $(i_r,j+k) \notin S$ for $r \in [k]$,
then $(i_{k+1},j+k) \notin S $;
\item[(iv)] if   $ \{ j_1, \ldots , j_{\lceil \frac{k+1}{2} \rceil +1} \}_< \subseteq [n]$, 
and $i_1, \ldots ,i_{\lceil \frac{k+1}{2} \rceil } \in [i,i+k-1]$ are such that 
$ (i_r,j_r) \in S$, $(i+k,j_r) \notin S $, and 
$i_r \equiv i+k \pmod{2}$ for $r=1, \ldots , \lceil \frac{k+1}{2} \rceil$,
 then $(i+k,j_{\lceil \frac{k+1}{2} \rceil +1}) \notin S $;
\item[(v)] if $j$ is the minimum index for which $([n]\times[j])\cap S\neq\emptyset$, then if $i\in [n]$ is such that $(i,j)\in S$ and  $(\{i+1\}\times [j+1,n])\cap S \neq \emptyset$ then $(i+2,j)\in S$;
\item[(vi)] if $(i,j)\in[n-2]\times [2,n-1] $ are such that $(i,j),(i+2,j-1)\in S$ and 
$(i+1,j),(i+2,j)\not\in S$ then $\{i+1\}\times[j+1,n]\cap S=\emptyset$.
\end{itemize}\end{pro}
\begin{proof}
We first prove~(i).
Since $(i,j),(k,l) \in S$ we have that $\sigma(i) >j$, $\sigma^{-1}(l) >k$,
and $\sigma^{-1}(l) \not \equiv k \pmod{2}$. Hence $\sigma^{-1}(l)
\not \equiv i \pmod{2}$ so $(i,l) \in D_o(\sigma)$.

The proof of~(ii) is identical, except that now $\sigma^{-1}(l) \equiv i \pmod{2}$ so $(i,l) \notin D_o(\sigma)$.

We now prove~(iii) (see Figure \ref{fig:forbidden}). 
Suppose, by contradiction, that $(i_{k+1},j+k) \in D_o(\sigma)$. Then 
$\sigma^{-1}(j+k) > i_{k+1}$ and $\sigma^{-1}(j+k) \not \equiv i_{k+1} \pmod{2}$.
Let $r \in [k]$. Since $ (i_r,j+k) \notin S$, by what we have just observed we have 
that $ \sigma(i_r) < j+k$. On the other hand , since 
$ \{ i_r \} \times [j,j+k-1] \cap S \not = \emptyset$, $ \sigma(i_r) > j$.
So $ \sigma(i_r) \in [j+1,j+k-1]$ for all $r \in [k]$, which is a contradiction.

To prove (iv) suppose, by contradiction, that $(i+k,j_{\lceil \frac{k+1}{2} \rceil +1}) \in S $. 
Then $\sigma(i+k) > j_{\lceil \frac{k+1}{2} \rceil +1}$.
Let $r \in [\lceil \frac{k+1}{2} \rceil ]$. Since $ (i_r,j_r) \in S$, we have that 
$ \sigma(i_r)> j_r$, $ \sigma^{-1}(j_r) \not \equiv i_r \pmod{2}$, and
$ \sigma^{-1}(j_r) > i_r$. So $\sigma^{-1}(j_r) \not \equiv i+k \pmod{2}$.
On the other hand, since $(i+k,j_r) \notin S$,
$ \sigma^{-1}(j_r) < i+k$. So $ \sigma^{-1}(j_r) \in [i+1,i+k-1]$ and 
$\sigma^{-1}(j_r) \not \equiv i+k \pmod{2}$
for all $r \in [\lceil \frac{k+1}{2} \rceil]$, which is a contradiction. This proves (iv).

Parts (v) and (vi) are easy to check (see Figure~\ref{fig:f2}).\qedhere
\end{proof}

%The condition in part%s~(i), (ii) and
%(iii) of the previous result is 
%probably best understood in terms of ``forbidden'' configurations. 
%An example is illustrated in Figure~\ref{fig:forbidden}.
Note that for $j=l$ part~(ii) of Proposition~\ref{pro:forbidden} implies that if $(i+1,j) \in S$ then $ (i,j) \notin S$.

%The conditions in Proposition~\ref{pro:oddiagnec}  are not sufficient for a subset $S \subseteq [n] \times [n]$
%to be the odd diagram of a permutation. For example ....

%It easy to see that the transpose of an ordinary diagram is still a
%diagram. This is not true for odd diagrams. However, condition~(iii) in the previous 
%result does have the following ``transpose'' analogue.

In the following result, we collect a few more conditions satisfied by odd diagrams which 
say that some configurations can only appear in certain areas of the square grid.

\begin{pro}
\label{pro:moreoddiagnec}
Let $\sigma \in S_n$ and $S := D_o(\sigma)$. Then:
\begin{itemize}
\item[(i)] if $i,j,k \in [n]$ are such that $(i+2k-1,j+k-1) \in [n]^2$,  
$\{ i \} \times [j,j+k-1] \subseteq S$,
and $[i+1,i+2k-1] \times [j,j+k-1] \subseteq [n]^2 \setminus S$,
then $j \geq k$;
\item[(ii)] if $i,j \in [n]$, $k \in \N$ are such that $(i+2k,j+2k) \in [n]^2$,  
and 
\[
(i+a,j+b) \in S \Leftrightarrow  a \equiv b \equiv 0 \pmod{2}, \mbox{ and } a+b \leq 2k
\]
for all $ (a,b) \in [0,2k]^2$, 
then $i+j \geq k+2$.
\end{itemize}
\end{pro}
\begin{figure}
  \begin{tikzpicture}[scale=0.4]
    %\draw[help lines, gray, very thin] (0,0) grid (32,10); 
  \node [align=left] at (1,9) {(a)};
 \node [align=left] at (18,9)
  {(b)};
       \draw [color=black] (6,2) grid (7,5);
        \draw (7,3) rectangle (14,4);
       % \draw (7,2) rectangle (14,3);
       \draw (3.5,2.5) -- (5.9,4.2);
        \node at (3,2) {\tiny $\star$ in $(i,j)$};
        \node at (6.5,4.5) {$\star$};
       \node at (10,3.5) {$\cdots \star \cdots$};
     %   \node at (10,2.5) { $\cdots \star \cdots$};
        \draw [decorate,decoration={brace,amplitude=5pt},xshift=-4pt,yshift=0pt]
        (7.3,5) -- (14,5) node [text width=3.5cm,black,midway,xshift=4mm,yshift=7.7mm] 
       { \tiny  at least one $\star$  in $[j+1,n]$ in  row $i+1$};
       \draw [decorate,decoration={brace,amplitude=3pt},xshift=0pt,yshift=0pt] (1,5)--(6-0.2,5)  node [text width=2cm,align=left,black,midway,yshift=5mm] {\tiny empty columns};
      %   \draw [decorate,decoration={brace,amplitude=8pt},xshift=0pt,yshift=-3pt]
       % (1.5,17) -- (14.5,17) node [align=center,black,midway,yshift=.6cm] 
       % { \tiny $ k$};

       \draw [color=black] (15+6,2) grid (15+7,5);
       \draw(20,2) rectangle (21,3);
         \node at (15+6.5,4.5) {$\star$};
       \node at (20.5,2.5) {$ \star $};
        \node at (15+10,2.5+1) { $\cdots \star \cdots$};
        \draw [decorate,decoration={brace,amplitude=5pt},xshift=-4pt,yshift=0pt]
        (15+7.3,4.5) -- (15+14,4.5) node [text width=3.8cm,black,midway,xshift=4.5mm,yshift=10mm]  { \tiny   $\star$ at $(i,j)$   and $(i+2,j-1)$ and at least one $\star$  in $[j+1,n]$ in row $i+1$};
       \draw (15+7,2+1) rectangle (15+14,3+1);
 % \draw (13+3.5,2.5) -- (14+5.9,4.2);
 % \node[text width=2cm] at (14+3,2) {\tiny};
       \node at (15.3,0) [align=center]{ $\star$ \tiny$ =$ element of the   odd diagram};
  \end{tikzpicture}
\caption{Forbidden configurations for odd diagrams}\label{fig:f2}
\end{figure}

\begin{proof}
We first prove (i). Since $\{ i \} \times [j,j+k-1] \subseteq S$ we have that $\sigma^{-1}(j+r-1)>i$
and $\sigma^{-1}(j+r-1) \not \equiv i \pmod{2}$ for all $r \in [k]$.
Therefore there is $r_0 \in [k]$ such that $\sigma^{-1}(j+r_0-1) \geq i+2k-1$.
Hence, since $\{ i+2t \} \times [j,j+k-1] \subseteq [n]^2 \setminus S$
for all $t \in [k-1]$, 
$\sigma(i+2t) \leq j+k-1$ for all $t \in [k-1]$, 
so $\sigma(i+2t) < j$ for all $t \in [k-1]$, and the result follows.

We now prove (ii) (see Figure \ref{fig:grid}).
We show  that
\begin{equation}
\label{claim}
|\{ a \in [k] : \sigma(i+2a-1) < j \}  \cup 
\{ b \in [k] : \sigma^{-1}(j+2b-1) < i \}| \geq k,
\end{equation}
which implies our claim.

\begin{figure}
      \begin{tikzpicture}[scale=0.4]
       
        \draw[gray,dashed] (-0.9,1.2) grid(6.9,8.9);
        \draw[white,solid] (0,2) grid(6,8);
        \draw[black,solid] (0,2) grid(6,8);
        \node at (.5,7.5) {$ \star $};
        \node at (2.5,7.5) {$ \star $};
        \node at (4.5,7.5) {$ \star $};
        \node at (.5,5.5) {$ \star $};
        \node at (.5,3.5) {$\star$};
        \node at (-1,9) {\tiny $(i,j)$};
        \node at (2.5,5.5) { $\star$};
        \draw (-.7,8.7 ) -- (0.2,7.8 );
        \node at (4,0.6) [align=center]{ $\star$ \tiny$ =$ element of the   odd diagram};
      \end{tikzpicture}
\caption{This configuration can only appear if $i+j\geq 4$}\label{fig:grid}
\end{figure}

We proceed by induction on $k \geq 0$.
Let $k=1$. Assume, by contradiction, that (\ref{claim}) fails. Then $\sigma(i+1) \geq j$ and
 $\sigma^{-1}(j+1) \geq i$. Hence, since $(i+2,j) \in S$ and $(i,j+1) \notin S$, $\sigma(i+1) \geq j+2$.
 Similarly, since $(i,j+2) \in S$ and $(i,j+1) \notin S$, $\sigma^{-1}(j+1) \geq i+2$.
 But then either $(i,j+1) \in S$ (if $\sigma^{-1}(j+1) \not \equiv i \pmod{2}$) or
 $(i+1,j+1) \in S$ (if $\sigma^{-1}(j+1) \equiv i \pmod{2}$), which is a contradiction. 
 So assume $k \geq 2$. Since $(i,j+2k) \in S$
we have that $\sigma(i) > j+2k$, $\sigma^{-1}(j+2k)>i$, and $\sigma^{-1}(j+2k)
\not \equiv i \pmod{2}$. 

If $\sigma^{-1}(j+2k)=i+1$ then $\sigma^{-1}(j+2b-1)<i$
for all $b \in [k]$ (else either $(i,j+2b-1) \in S$ or $(i+1,j+2b-1) \in S$
for some $b \in [k]$) and the claim holds. 

Assume now that $\sigma^{-1}(j+2k)=i+2b+1$ for
some $b \in [k-1]$ then $j+2k-1=\sigma(i+2)$ (else 
$(i+2,j+2k) \in S$). Hence $\sigma(i+1)<j$ (for if $\sigma(i+1)>j+2k$ then
$(i+1,j+2k-1) \in S$, while if $j \leq \sigma(i+1) < j+2k$ then necessarily 
$\sigma(i+1)=j+2a-1$ for some $a \in [k-1]$, which implies that $(i,j+2a-1) 
\in S$, which again contradicts our hypotheses). In an analogous way one
concludes that $\sigma^{-1}(j+1)<i$ (for if $\sigma^{-1}(j+1) \geq i+2k$ then
either $(i,j+1) \in S $ or $(i+2b+1,j+1) \in S$, while if 
$i \leq \sigma^{-1}(j+1) < i+2k$ then $\sigma^{-1}(j+1) \not \equiv i
\pmod{2}$, so again $(i,j+1) \in S$). Now, by our induction hypothesis
(applied to $i+2,j+2,k-2$) we conclude that 
\[
|\{ a \in [k-2] : \sigma(i+2a+1) < j+2 \}  \cup 
\{ b \in [k-2] : \sigma^{-1}(j+2b+1) < i+2 \}| \geq k-2.
\]
But if $\sigma(i+2a+1) < j+2$ for some $a \in [k-2]$ then 
$\sigma(i+2a+1) < j$ (for if $\sigma(i+2a+1) = j+1$ then $(i,j+1) \in S$,
while $\sigma(i+2a+1) \neq j$ since $(i+2k,j) \in S$). Also, since $\sigma(i+1)<j$,
and $\sigma(i)>j+2k$, if $\sigma^{-1}(j+2b+1) < i+2$ for some $b \in [k-2]$, 
then $\sigma^{-1}(j+2b+1) < i$. Therefore 
\[
|\{ a \in [k-2] : \sigma(i+2a+1) < j \}  \cup 
\{ b \in [k-2] : \sigma^{-1}(j+2b+1) < i \}| \geq k-2,
\]
and this implies (\ref{claim}) since $\sigma(i+1)<j$ and 
$\sigma^{-1}(j+1)<i$.

Finally, assume that $\sigma^{-1}(j+2k)=i+2b+1$ for
some $b \geq k$. Then as in the previous case we conclude that 
$j+2k-1=\sigma(i+2)$ and $\sigma(i+1)<j$. Therefore $\sigma(i+4)=j+2k-3$
(for if $\sigma(i+4)> j+2k$ then $(i+4,j+2k) \in S$, while if $\sigma(i+4)=
j+2k-2$ then $(i+2,j+2k-2) \notin S$). This implies that $\sigma(i+3)<j$
(for if $\sigma(i+3)>j+2k-3$ then $(i+3,j+2k-3) \in S$). Now, by our induction
hypothesis (applied to $i+4,j,k-2$) we have that 
\[
|\{ a \in [k-2] : \sigma(i+2a+3) < j \}  \cup 
\{ b \in [k-2] : \sigma^{-1}(j+2b-1) < i+4 \}| \geq k-2.
\]
But if $\sigma^{-1}(j+2b-1) < i+4$ for some $b \in [k-2]$ then
$\sigma^{-1}(j+2b-1) < i$ so
\[
|\{ a \in [k-2] : \sigma(i+2a+3) < j \}  \cup 
\{ b \in [k-2] : \sigma^{-1}(j+2b-1) < i \}| \geq k-2
\]
and this proves (\ref{claim}) since $\sigma(i+1)<j$ and $\sigma(i+3)<j$.

This concludes the induction step and hence the proof.
\end{proof}

The conditions in Propositions~\ref{pro:oddiagnec},  \ref{pro:forbidden} and 
\ref{pro:moreoddiagnec}  are also sufficient for $S \subseteq [n]^2$, with  
$n\leq 4$, to be the odd diagram of a permutation. However, for $n\geq 5$ 
they fail to characterize these subsets. For instance, $\{ (1,1),(1,2),(3,2),(4,4) \}$ is not the 
odd diagram of any permutation.

\vspace{5mm}

While we are unable to characterize odd diagrams, we can characterize a
closely related set. 
For $\sigma \in S_n$ let 
$$
\Inv_o(\sigma) := \{ (i,j) \in \Inv(\sigma) : j \not \equiv i \pmod{2} \}.
$$
We call $\Inv_o(\sigma)$ the {\em odd inversion set} of $\sigma$.
Note that $|\Inv_o(\sigma)| = L(\sigma)$ and that $\Inv_o(\sigma)\subset OS_n$
where $OS_n := \{ (i,j) \in [n]^2 : i<j, j \not \equiv i \pmod{2}  \}$
is the {\em odd staircase} of size $n$.

The following result follows easily from our definitions.
\begin{lem}
Let $\sigma \in S_n$ and $(i,j) \in [n]^2$. Then
$(i,\sigma(j)) \in D_o(\sigma)$ if and only if $(i,j) \in \Inv_o(\sigma)$.
\end{lem}

We now characterize the odd inversion sets of permutations.
Recall that the {\em Tur\'an graph} (see, e.g., \cite{Turan}) is the complete
bipartite graph $T_n := ([n], E_n)$ where, if $i,j \in [n]$, then
$\{ i,j \} \in E_n$ if and only if $i \not \equiv j \pmod{2}$.

Given $I \subseteq OS_n$ we define an orientation $A_I$ of 
$T_n$ as follows. Let $\{ i,j \}_< \in E_n$. Then we let
$i \rightarrow j$ in $A_I$ if and only if $(i,j) \in I$.
So, for example, for $I=\{(1,4),(2,3),(2,5),(3,4)\}$ we get the orientation of $T_5$ in Figure~\ref{fig:Turan}.
We then have the following simple characterization of odd inversion sets
of permutations in terms of orientations of $T_n$.

\begin{pro}
Let $I \subseteq OS_n$. Then there is a permutation $\sigma \in S_n$ 
such that $I=\Inv_o(\sigma)$ if and only if $A_I$ is acyclic.
\end{pro}
\begin{proof}
Suppose first that $I=\Inv_o(\sigma)$ for some $\sigma \in S_n$. Then we have that,
for all $\{ i,j \}_< \in E_n$, $i \rightarrow j$ in $A_I$ if and only if
$\sigma(i) > \sigma(j)$, so $A_I$ is acyclic.

Conversely, suppose that $A_I$ is acyclic. It is then easy to see, by
induction on the number of  vertices, that given any acyclic
orientation of a 
graph $D=(V,E)$ there is a bijection $f: V \rightarrow [|V|]$ such that
if $\{ x,y \} \in E$ then $x \rightarrow y$ if and only if $f(x)>f(y)$. Indeed,
as the orientation is acyclic there is either a source or a sink $v \in V$. 
Say $v$ is a source. Now 
define $f(v):= |V|$, remove $v$ and all edges incident to it from $D$ and argue
by induction. In particular, there is $\sigma \in S_n$ such that 
$i \rightarrow j$ if and only if
$\sigma(i) > \sigma(j)$ for all $\{ i,j \}_< \in E_n$.
So $I=\Inv_o(\sigma)$.
\end{proof}
\begin{figure}
  \begin{tikzpicture}
    \tikzset{vertex/.style = {shape=circle,fill=black,inner sep=1.5pt}}
\tikzset{edge/.style = {->,thick,> = latex'}}
% vertices
\node[vertex,label={south  west:$5$}] (a) at  (0,0) {};
\node[vertex,label={  west:$3$}] (b) at  (0,2) {};
\node[vertex,label={north  west:$1$}] (c) at  (0,4) {};
\node[vertex,label={south  east:$4$}] (d) at  (2,1) {};
\node[vertex,label={north  east:$2$}] (e) at (2,3) {};
%\node[vertex] (a2) at (3,0) {};
%edges
%\draw[edge] (b) to (a);
\draw[edge] (b) to (d);
\draw[edge] (a) to (d);
\draw[edge] (e) to (b);
\draw[edge] (e) to (c);
\draw[edge] (c)  to (d);
\draw[edge] (e)  to (a);

  \end{tikzpicture}\caption{Ayclic orientation of $T_5$ defined by   $I=\{(1,4),(2,3),(2,5),(3,4)\}$}\label{fig:Turan}
  \end{figure}

We illustrate the preceding result with an example.
\begin{exm} Given $\sigma=[3,5,4,1,2]\in S_5$ we have
  $\Inv_o(\sigma)=\{(1,4),(2,3),(2,5),(3,4)\}$,  which defines the
  acyclic orientation in Figure~\ref{fig:Turan}.  Conversely, given
  $I=\{(1,4),(2,3),(2,5),(3,4)\}$, following the steps of the
  induction  and maintaining notation from the above proof we get:
  $f(2)=5$, $f(1)=4$,  $f(3)=3$, $f(5)=2$ and $f(4)=1$, which defines
  the permutation  $\tau=[4,5,3,1,2]$ with $\Inv_o(\tau)=I$. As
  expected,  this is not the only permutation of $S_5$ with this odd inversion
  set.  There are $6$ permutations with odd inversion set equal to
  $I$:  $[2,5,3,1,4]$,  $[2,5,4,1,3]$,  $[3,5,2,1,4]$,  $[4,5,2,1,3]$, 
  $\sigma$ and $\tau$. 
  \end{exm}

\vspace{1em}

\section{Shifting and  reversing}\label{sec:operations}

In this section we derive a number of results concerning
operations that can  be performed on the subsets defining a descent
class,  after  which the sign-twisted generating function of the odd length
remains the same  or changes in a controlled way.
We also give
sufficient conditions on a descent class for the corresponding sign-twisted
generating function to be zero, and we compute it explicitly for the
descent  class of the alternating permutations.

Recall that a permutation in the descent class $\D_J^I(S_n)$ is a
permutation  which is increasing in the positions corresponding to
 $I\cup (I+1)$  and decreasing in  $J\cup (J+1)$.

The proofs of the following two results are similar to  those
of~\cite[Lemma~3.1 and Proposition~3.3]{BC}. 
However, for the reader's convenience, and for completeness, we
provide proofs here.
Our first lemma shows that the sign-twisted generating function of the odd length is zero on the non-chessboard elements of a descent class in which the ascents and the descents are disjoint.
\begin{lem}\label{chdes}
Let $I,J\subseteq [n-1]$, $I \cap J = \emptyset$. Then
\[\sum_{\sigma \in \D_J^I(S_n)} \f =\sum_{\sigma \in \D_J^I(C_n)} \f.\]
\end{lem}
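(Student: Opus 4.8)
The plan is to show that the non-chessboard elements contribute nothing to the left-hand side, i.e. that
\[
\sum_{\sigma \in \D_J^I(S_n \setminus C_n)} \f = 0 .
\]
Since $\D_J^I(C_n) = \D_J^I(S_n)\cap C_n$, this vanishing is exactly the difference between the two sides of the lemma, so establishing it proves the statement. To prove it I would construct a \emph{sign-reversing, $L$-preserving involution} $\phi$ on $\D_J^I(S_n\setminus C_n)$: an involution that preserves $L(\sigma)$, preserves the descent set $D(\sigma)$ (hence membership in the descent class), and flips the parity of the Coxeter length $\ell(\sigma)$. Pairing each $\sigma$ with $\phi(\sigma)$ then cancels the terms $\f$ and $(-1)^{\ell(\phi(\sigma))}x^{L(\phi(\sigma))}$, forcing the sum to be $0$.

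The involution I have in mind interchanges two consecutive values. First I would record the characterization that $\sigma \in C_n$ if and only if $\sm(v)\not\equiv \sm(v+1)\pmod 2$ for every $v\in[n-1]$; equivalently, $\sigma\notin C_n$ exactly when some pair of consecutive values $v,v+1$ occupies positions of the same parity. For $\sigma\in\D_J^I(S_n\setminus C_n)$, I would let $v$ be the least such index and define $\phi(\sigma)$ to be the permutation obtained by interchanging the values $v$ and $v+1$, that is $\phi(\sigma)=s_v\,\sigma$ (left multiplication by the adjacent transposition acting on values).

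Next I would verify the three required properties. Writing $p=\sm(v)$ and $q=\sm(v+1)$, the swap changes the value only at positions $p$ and $q$, and since $v,v+1$ are consecutive, for any third value $u$ one has $u<v\Leftrightarrow u<v+1$; hence the inversion status of every pair of positions other than $\{p,q\}$ is unchanged, while the pair $\{p,q\}$ flips. Thus exactly one inversion is created or destroyed, so $\ell$ changes by $\pm1$ and its parity flips; and because $p\equiv q\pmod 2$ the pair $\{p,q\}$ never contributes to $L$, giving $L(\phi(\sigma))=L(\sigma)$. Moreover $p\equiv q\pmod 2$ forces $|p-q|\ge 2$, so the positions $p,q$ are non-adjacent, and a short check at the positions $p-1,p,q-1,q$ shows that no descent is created or destroyed; hence $D(\phi(\sigma))=D(\sigma)$ and in particular $\phi(\sigma)\in\D_J^I$.

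Finally I would confirm that $\phi$ is a well-defined involution on the non-chessboard elements. The key observation is that interchanging $v,v+1$ at equal-parity positions leaves the \emph{parity of the position of every value} unchanged: each of $v,v+1$ moves to a position of the same parity, and all other values stay put. Consequently the set of indices $w$ for which $w,w+1$ sit in equal-parity positions is identical for $\sigma$ and $\phi(\sigma)$; this set is nonempty (so $\phi(\sigma)\notin C_n$) and has the same minimum $v$, whence $\phi(\phi(\sigma))$ swaps $v,v+1$ back and returns $\sigma$. The main obstacle is exactly this triple bookkeeping: one must choose the swap so that $L$ is preserved (forcing the equal-parity positions), so that the whole descent set is preserved (which the same equal-parity condition secures through non-adjacency), and so that the distinguished index $v$ is itself invariant under $\phi$, ensuring $\phi$ is a genuine involution rather than merely a sign-reversing pairing.
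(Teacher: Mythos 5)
Your proposal is correct and is essentially identical to the paper's proof: the paper uses the same sign-reversing involution $\sigma \mapsto s_i\sigma$ where $i$ is the minimal value with $\sigma^{-1}(i)\equiv\sigma^{-1}(i+1)\pmod 2$, noting that $|\sigma^{-1}(i)-\sigma^{-1}(i+1)|\ge 2$ keeps the descent set intact while $L$ is preserved and $\ell$ changes parity. Your write-up simply spells out the descent-set and minimality checks in more detail than the paper does.
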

\begin{proof}
Let $\sigma \in\D_J^I(S_n)\setminus\D_J^I(C_n)$. Then there exists 
$i\in[n-1]$ such that $\sigma^{-1}(i)\equiv \sigma^{-1}(i+1) \pmod 2$
(else either $\sigma ^{-1}(i) \equiv i \pmod{2}$ for all $i \in [n]$ or 
$\sigma ^{-1}(i) \equiv i+1 \pmod{2}$ for all $i \in [n]$ so $\sigma \in C_n$).
Let $i$ be minimal with this property and define $\sigma^{*}=s_{i} 
\sigma$. This is a well defined involution on 
$\D_J^I(S_n)\setminus\D_J^I(C_n)$ since 
$|\sigma^{-1}(i)-\sigma^{-1}(i+1)|\geq 2$. But $L(\sigma^{*})=L(\sigma)$ and 
$\ell(\sigma^{*})=\ell(\sigma) \pm 1$, which implies the result.\qedhere
\end{proof}
The next result is the first of a series of invariance results for the sign-twisted generating function of the odd length over a descent class $\D ^I_ J(S_n)$. It shows that a connected component of odd cardinality of the  ascents can be shifted or enlarged of one unit to the right  without changing the generating function, as long as it remains a connected component.
\begin{pro}
\label{scr}
Let $I,J \subseteq [n-1]$, $I \cap J = \emptyset$. Let $i \in {\mathbb N}$, $k \in {\mathbb N}_0$ be such that
$[i,i+2k]\subseteq I$ is a connected component of $I \cup J$ and $i+2k+2 \not \in I\cup J$. 

Then
\begin{equation}
\label{sc}
\sum_{\sigma \in  \D ^I_ J(S_n)}(-1)^{\ell (\sigma )}x^{L(\sigma )} =
\sum_{\sigma \in  \D ^{I\cup \tilde{I}}_ J(S_n)}(-1)^{\ell (\sigma )}x^{L(\sigma )}   =
\sum_{\sigma \in  \D ^{ \tilde{I}}_ J (S_n)}(-1)^{\ell (\sigma )}x^{L(\sigma )}
\end{equation}
where $\tilde{I} := (I \setminus \{ i \} ) \cup \{ i+2k+1 \} $.
\end{pro}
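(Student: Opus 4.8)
The plan is to establish the two equalities separately, in each case by producing a sign-reversing, $L$-preserving involution on the set of ``extra'' permutations. For the first equality, observe that $\D^{I\cup\tilde I}_J(S_n)=\D^{I\cup\{i+2k+1\}}_J(S_n)\subseteq\D^I_J(S_n)$, the larger class being cut out by the single extra requirement that position $i+2k+1$ be an ascent. Hence the difference $\D^I_J(S_n)\setminus\D^{I\cup\tilde I}_J(S_n)$ consists exactly of those $\sigma$ for which $i,\dots,i+2k$ are ascents but $i+2k+1$ is a descent, i.e. $\sigma(i)<\sigma(i+1)<\dots<\sigma(i+2k+1)>\sigma(i+2k+2)$: an increasing run of even length $2k+2$ followed by a descent. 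It then suffices to exhibit an involution $\phi$ on this difference set with $L(\phi(\sigma))=L(\sigma)$ and $\ell(\phi(\sigma))=\ell(\sigma)\pm1$, for then $\sum(-1)^{\ell(\sigma)}x^{L(\sigma)}$ over the difference set vanishes and the first equality follows.

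To define $\phi$, I would let $T=\{t_0<\dots<t_{2k+2}\}$ be the set of values of $\sigma$ on the positions $i,i+1,\dots,i+2k+2$, and write $\sigma(i+2k+2)=t_c$; the descent at $i+2k+1$ forces $c<2k+2$, so $c$ lies in one of the pairs $\{2a,2a+1\}$ with $0\le a\le k$. I then set $\phi(\sigma)$ to be the permutation obtained from $\sigma$ by interchanging the values in positions $i+2a$ and $i+2k+2$; one checks that $\phi$ exchanges the two members of each such pair and is an involution. The mechanism is that the two transposed positions $i+2a$ and $i+2k+2$ have the \emph{same} parity, so the inversion between them is never counted by $L$, while every position strictly between them lies in the increasing run and hence carries a value larger than both transposed values, so its contribution to each inversion count, and in particular to $L$, is unchanged; thus $L$ is preserved while $\ell$ changes by exactly one. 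Finally the run $\sigma(i)<\dots<\sigma(i+2k+1)$ stays increasing and the descent at $i+2k+1$ is retained, and the only other positions whose ascent/descent status can change, namely $i-1$ and $i+2k+2$, lie outside $I\cup J$ by hypothesis, so $\phi(\sigma)$ remains in the difference set.

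The second equality is the mirror image. Here $\D^{I\cup\tilde I}_J(S_n)\subseteq\D^{\tilde I}_J(S_n)$, the difference consisting of those $\sigma$ with a descent at position $i$ in front of the increasing run $\sigma(i+1)<\dots<\sigma(i+2k+2)$ of even length $2k+2$. I would build the analogous involution, now interchanging the value $\sigma(i)$ with a value sitting at an equal-parity position inside the run, chosen so that all intermediate run values are \emph{smaller} than both transposed values; the same bookkeeping shows $L$ is preserved, $\ell$ flips by one, and membership in the descent class is maintained because the only newly affected positions, $i-1$ and $i+2k+2$, again lie outside $I\cup J$.

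The delicate point throughout, which I expect to be the main obstacle, is to keep $L$ invariant while flipping the sign and without disturbing any of the forced ascents and descents defining the class. These demands pull in opposite directions: preserving $L$ forces one to transpose values at equal-parity positions (so the flipped inversion is invisible to $L$), and it is precisely the even length $2k+2$ of the run that makes an equal-parity partner available inside it; whereas preserving the descent class forces the partner to be the specific run entry that leaves the run monotone. Checking that these constraints are simultaneously satisfiable, and that the boundary positions $i-1$ and $i+2k+2$ whose status does change are genuinely free, is the crux, and it is exactly here that the hypotheses that $[i,i+2k]$ is a connected component of $I\cup J$ and that $i+2k+2\notin I\cup J$ are used. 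One could alternatively invoke Lemma~\ref{chdes} first to restrict to chessboard elements, but the involution above needs no such reduction.
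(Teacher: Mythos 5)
Your proposal is correct and takes essentially the same approach as the paper: the paper's pairwise cancellation of the cases $\sigma(i+j-1)<\sigma(i+2k+2)<\sigma(i+j)$ via the maps $\sigma\mapsto\sigma\,(i+2r,\,i+2k+2)$ is exactly your involution $\phi$, just packaged as a case decomposition with one bijection per pair of cases rather than as a single global map, and the parity/boundary checks you perform are the same ones the paper relies on. The second equality is handled identically (the paper omits it as ``similar'').
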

\begin{proof} First note that, by our hypotheses, $(I \cup \tilde{I}) \cap J=\emptyset $. 
We have 
\begin{eqnarray} \nonumber
\sum_{\sigma \in \D_J^I(S_n)}(-1)^{\ell (\sigma )}x^{L(\sigma )} & = &\hspace{-1em}\sum_{\substack{\sigma \in \D_J^I(S_n): \\ \sigma (i)>\sigma (i+2k+2)}  }\hspace{-2em}(-1)^{\ell (\sigma )}x^{L(\sigma )} + \sum_{\substack { \sigma \in \D_J^I(S_n): \, \sigma (i+2k+1) <\\  \, \sigma (i+2k+2)}}\hspace{-1em}(-1)^{\ell (\sigma )}x^{L(\sigma )} \\ \label{sum}& + & \sum_{j=1}^{2k+1} \left(  \sum_{\substack {\sigma \in \D_J^I(S_n): \; \sigma (i+j-1)< \\ \sigma (i+2k+2)<\sigma (i+j) }}(-1)^{\ell (\sigma )}x^{L(\sigma )} \right).
\end{eqnarray}
Let $r \in [k]$. Note that, by our hypotheses, $ i-1 \notin J$ and
$ i+2k+1 \notin J$. Therefore the map $\sigma \mapsto \tilde{\sigma} := \sigma \; (i+2k+2 \, , i+2r) $ is a bijection between 
$\{ \sigma \in \D_J^I(S_n): \; \sigma (i+2r)< \sigma (i+2k+2)<\sigma (i+2r+1)\}$
and
$\{ \sigma \in \D_J^I(S_n): \; \sigma (i+2r-1)< \sigma (i+2k+2)<\sigma (i+2r)\}$.
Furthermore, $\ell (\widetilde{\sigma})=\ell (\sigma )+1$
and $L(\widetilde{\sigma}) =L(\sigma )$ so
\[
\sum_{\substack{\sigma \in  \D_J^I(S_n)\,:\, \sigma (i+2r) < \\  \sigma (i+2k+2) < \sigma (i+2r+1) }}(-1)^{\ell (\sigma )}x^{L(\sigma )} = -
\sum_{\substack{ \sigma \in  \D_J^I(S_n)\,:\, \sigma (i+2r-1) < \\  \sigma (i+2k+2) < \sigma (i+2r) }}(-1)^{\ell (\sigma )}x^{L(\sigma )}.
\]
Similarly, the bijection $\sigma \mapsto \sigma \; (i+2k+2 , i) $ shows that
\[\sum_{\substack{\sigma \in  \D_J^I(S_n)\,:\\ \, \sigma (i+2k+2) < \sigma (i) }}(-1)^{\ell (\sigma )}x^{L(\sigma )} =
- \sum_{\substack{\sigma \in  \D_J^I(S_n)\,:\, \sigma (i) < \\  \sigma (i+2k+2) < \sigma (i+1) }}(-1)^{\ell (\sigma )}x^{L(\sigma )}.
\]
Therefore, by (\ref{sum}), 
\[ \sum_{\sigma \in \D_J^I(S_n)}(-1)^{\ell (\sigma )}x^{L(\sigma )} =
\sum_{\substack{\sigma \in \D_J^I(S_n)\,:\, \sigma (i+2k+1) < \\ \sigma
    (i+2k+2)}} (-1)^{\ell (\sigma )}x^{L(\sigma )}
\]
and the first equality in (\ref{sc}) follows. 

The proof of the second equality is similar, and is therefore omitted.\qedhere
\end{proof}

Note that the proof of the previous result actually yields that 
if $I,J \subseteq [n-1]$ are such that $I \cap J = \emptyset$,  and
if  $i \in {\mathbb N}$ and $k\in \mathbb N _0$ are 
such that $[i,i+2k+1]$ is a connected component of $I \cup J$ and $i+2k+1 \in J$,
$[i,i+2k] \subseteq I$, then $\tilde{I}\cap J\neq \emptyset$, hence
\begin{equation*}
\sum_{\sigma \in  \D ^I_J(S_n)}(-1)^{\ell (\sigma )}x^{L(\sigma )} = 0.
\end{equation*}
This is a special case of a more general fact (see Proposition \ref{zero}).

%We now return to our investigation of operations leaving the sign-twisted generating function of the odd length unaffected.
The following is the ``left'' version of Proposition~\ref{scr}. Informally, it shows that a connected component of odd cardinality of the  ascents can be shifted or enlarged of one unit to the left  without changing the sign-twisted generating function, as long as it remains a connected component. 

\begin{pro}\label{scl}
Let $I,J \subseteq [n-1]$, $I \cap J =\emptyset$. Let $i \in {\mathbb N}$, $k \in {\N_0}$ be such
that $[i+1,i+2k+1] \subseteq I$ is a connected component of $I \cup J$, 
and $i-1 \not \in I\cup J$.
Then
\[ 
\sum_{\sigma \in \D ^I_ J(S_n)}(-1)^{\ell (\sigma )}x^{L(\sigma )} 
= \sum_{\sigma \in \D^{I \cup \bar{I}}_J (S_n)}(-1)^{\ell (\sigma )}x^{L(\sigma )}
= \sum_{\sigma \in \D^{\bar{I}}_J(S_n)}(-1)^{\ell (\sigma )}x^{L(\sigma )} 
\]
where $\bar{I} := (I \setminus \{ i+2k+1 \} ) \cup \{ i \} $.
\end{pro}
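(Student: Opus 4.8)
The plan is to prove Proposition~\ref{scl} by reducing it to the already-established Proposition~\ref{scr} via the reversing involution. The key observation is that Proposition~\ref{scr} handles a connected component sitting at positions $[i,i+2k]\subseteq I$ with a free slot at $i+2k+2$ to its \emph{right}, and moves the ``loose'' index from the left end ($i$) to the right end ($i+2k+1$). The present statement is the mirror image: a component at $[i+1,i+2k+1]\subseteq I$ with a free slot at $i-1$ to its \emph{left}, moving the loose index from $i+2k+1$ to $i$. So the natural approach is to conjugate by the longest element $w_0$, which reverses positions, and check that this turns the hypotheses and conclusion of Proposition~\ref{scr} into those of Proposition~\ref{scl}.

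First I would set up the reversing map explicitly. Recall $w_0(i)=n+1-i$, and note that right-multiplication $\sigma\mapsto \sigma w_0$ reverses the one-line notation, sending position $i$ to position $n+1-i$. Under this operation a descent at position $i$ becomes an ascent (and conversely), so $\D^I_J(S_n)$ is carried bijectively to $\D^{I'}_{J'}(S_n)$, where $I'$ and $J'$ are obtained from $J$ and $I$ respectively by the reflection $i\mapsto n-i$ of the index set $[n-1]$. Crucially, by Proposition~\ref{baspro}(iii) we have $L(\sigma w_0)=L(w_0)-L(\sigma)$, and similarly $\ell(\sigma w_0)=\ell(w_0)-\ell(\sigma)$; hence the summand $(-1)^{\ell(\sigma)}x^{L(\sigma)}$ transforms in a completely controlled, global way under the bijection. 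I would verify that the interval $[i+1,i+2k+1]$ in $I$, with free slot $i-1$ on the left, reflects precisely to an interval of the form $[i',i'+2k]$ in $I'$ (the reflection preserves length and the even-length offset $2k$) with a free slot at $i'+2k+2$ on the right, i.e. exactly the configuration Proposition~\ref{scr} requires, and that $\bar I$ reflects to the corresponding $\tilde I$.

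With the dictionary in place, the proof is a short chain: apply the reversing bijection to pass from the left-hand sum over $\D^I_J(S_n)$ to a sum over the reflected class, invoke Proposition~\ref{scr} there (both equalities), and reverse back. Because the global factor $(-1)^{\ell(w_0)}x^{L(w_0)}$ multiplies every term identically on both sides of each equality, it cancels and the three sums in Proposition~\ref{scl} match the three sums of Proposition~\ref{scr} term for term. I expect the main obstacle to be purely bookkeeping: pinning down exactly how the reflection $i\mapsto n-i$ acts on the labels $I$, $J$, the connected component, and the modified set $\bar I$, and confirming that the parity of $k$ (the ``$2k$'' vs.\ ``$2k+1$'' lengths) and the side on which the free slot sits are interchanged correctly, so that the hypotheses transport cleanly. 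An alternative, should the conjugation dictionary prove fiddly, is to mimic the proof of Proposition~\ref{scr} directly: decompose the sum according to the relative position of $\sigma(i-1)$ among $\sigma(i+1),\dots,\sigma(i+2k+1)$ and use the transpositions $\sigma\mapsto\sigma\,(i-1,\,i+2r)$ to cancel adjacent blocks in pairs, exactly as in the original argument but with the roles of left and right exchanged; this is the ``self-contained'' route but essentially repeats the earlier computation.
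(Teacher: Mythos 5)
Your strategy is genuinely different from the paper's, and as written it contains a concrete error in the ``dictionary'' step. Right multiplication $\sigma\mapsto\sigma w_0$ reverses the one-line notation, but in doing so it \emph{exchanges} ascents and descents: $p\in D(\sigma w_0)$ if and only if $n-p$ is an ascent of $\sigma$. Hence $\D^I_J(S_n)$ is carried to $\D^{n-J}_{n-I}(S_n)$, i.e.\ the reflected copy of $I$ becomes the new set of \emph{forced descents} (your $J'$), not the new set of forced ascents. So the reflected interval $[n-i-2k-1,\,n-i-1]$ lands in the subscript, which is the configuration of Proposition~\ref{shrdes} (a descent component with a free slot two places to its right), not the configuration of Proposition~\ref{scr}; your assertion that the interval ``reflects precisely to an interval \ldots in $I'$'' contradicts your own description of $I'$ as the reflection of $J$. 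The strategy can be repaired: either compose with Proposition~\ref{destoas2} (so two applications of $w_0$ in total), or better, use conjugation $\sigma\mapsto w_0\sigma w_0$, which reflects positions while keeping ascents as ascents and satisfies $\ell(w_0\sigma w_0)=\ell(\sigma)$ and $L(w_0\sigma w_0)=L(\sigma)$ by Proposition~\ref{baspro}(iii), so no global factor and no $x\mapsto 1/x$ substitution (which you also glossed over) even arise. Your fallback of redoing the cancellation argument of Proposition~\ref{scr} with left and right exchanged would of course also work.

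For comparison, the paper's proof is a two-line reduction requiring no symmetry of the group at all: one simply checks that the pair $(\bar I, J)$ satisfies the hypotheses of Proposition~\ref{scr} with the same $i$ and $k$ (namely $[i,i+2k]$ is a connected component of $\bar I\cup J$ contained in $\bar I$, and $i+2k+2\notin\bar I\cup J$), and that the right-shift of $\bar I$ produced by that proposition is exactly $(\bar I\setminus\{i\})\cup\{i+2k+1\}=I$. Applying Proposition~\ref{scr} to $\D^{\bar I}_J(S_n)$ then yields the three equalities verbatim. This is both shorter and more robust than transporting the statement through a reversal bijection.
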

\begin{proof} 
Under our hypotheses we have that $(I \cup \bar{I}) \cap J=\emptyset $, $[i,i+2k]$ is a connected 
component of $\bar{I} \cup J$, $[i,i+2k] \subseteq \bar{I}$, and $i+2k+2 \not \in \bar{I} \cup J$, 
so the result follows from Proposition~\ref{scr}. \qedhere
\end{proof}

We now show that a connected component of even cardinality of the descents can be ``transformed'' (or ``reversed'') into a connected component of the ascents, by changing the generating function by a simple factor.
\begin{lem}\label{destoas}
Let $I,J \subseteq [n-1]$, $I \cap J = \emptyset$, and $i,k$ $\in \mathbb{N}$ be such that $K:=[i,i+2k-1]$ is a connected component of $I \cup J$, $K \subseteq J$. Then
\begin{equation} \sum_{\sigma \in {\mathcal D}^{I}_{J}(C_{n,\pm})}{(-1)^{\ell(\sigma)}x^{L(\sigma)}}=(-1)^k x^{k(k+1)}\sum_{\sigma \in {\D}^{I\cup K}_{J\setminus K}(C_{n,\pm})}{(-1)^{\ell(\sigma)}x^{L(\sigma)}}.
\end{equation}
In particular,
\begin{equation}
\sum_{\sigma \in {\D}^{I}_{J}(S_{n})}{(-1)^{\ell(\sigma)}x^{L(\sigma)}}=
(-1)^k x^{k(k+1)}\sum_{\sigma \in {\D}^{I\cup K}_{J\setminus K}(S_{n})}{(-1)^{\ell(\sigma)}x^{L(\sigma)}}.
\end{equation}
\end{lem}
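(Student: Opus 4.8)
We have a connected component $K=[i,i+2k-1]$ of $I\cup J$ contained entirely in $J$, so the permutations in $\D^I_J$ are forced to be \emph{decreasing} on the positions $i,i+1,\ldots,i+2k$ (that is, on $2k+1$ consecutive positions). We want to flip this to a component contained in $I$, i.e. make the permutation \emph{increasing} on those same positions, at the cost of the factor $(-1)^k x^{k(k+1)}$. The plan is to construct an explicit bijection between $\D^I_J(C_{n,\pm})$ and $\D^{I\cup K}_{J\setminus K}(C_{n,\pm})$ that tracks exactly how both $\ell$ and $L$ change, then obtain the $S_n$ statement either by summing the two $C_{n,+}$ and $C_{n,-}$ cases via Lemma~\ref{chdes}, or by running the identical argument directly on $S_n$.

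**The bijection.** Given $\sigma\in\D^I_J$, the values $\sigma(i),\ldots,\sigma(i+2k)$ appear in strictly decreasing order. I would define $\bar\sigma$ by \emph{reversing} this block of values in place: set $\bar\sigma(i+j):=\sigma(i+2k-j)$ for $j=0,\ldots,2k$ and $\bar\sigma=\sigma$ elsewhere. This makes $\bar\sigma$ strictly increasing on positions $i,\ldots,i+2k$, so $K$ moves from the decreasing set to the increasing set; because the boundary positions $i-1$ and $i+2k+1$ lie outside $I\cup J$ (as $K$ is a connected component, and one checks the endpoints are unconstrained exactly as in the proofs of Proposition~\ref{scr} and Proposition~\ref{zero}), no descent condition outside the block is disturbed, and $\bar\sigma\in\D^{I\cup K}_{J\setminus K}$. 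Reversing a block of $2k+1$ consecutive positions is an involution, so this is a bijection. One must also verify it preserves the chessboard type: reversing values across positions of alternating parity sends an even-chessboard element to an even one and an odd to an odd (the parity pattern of the block is symmetric under reversal), so $C_{n,+}$ maps to $C_{n,+}$ and $C_{n,-}$ to $C_{n,-}$, justifying the $\pm$ statement.

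**Tracking the statistics.** The Coxeter length: reversing a decreasing block of length $2k+1$ into an increasing one removes exactly all $\binom{2k+1}{2}=k(2k+1)$ inversions internal to the block, and inversions involving positions outside the block are unchanged (each outside value is either larger or smaller than the whole block, independent of the block's internal order). So $\ell(\bar\sigma)=\ell(\sigma)-k(2k+1)$, whence $(-1)^{\ell(\bar\sigma)}=(-1)^{\ell(\sigma)}(-1)^{k(2k+1)}=(-1)^{\ell(\sigma)}(-1)^k$, giving the sign factor $(-1)^k$. For the odd length $L$: only inversions between positions of \emph{opposite} parity count, and again only the internal pairs change. Among the $\binom{2k+1}{2}$ internal pairs $(i+a,i+b)$ with $a<b$, those with $a\not\equiv b\pmod 2$ are precisely the pairs at odd distance; counting these over $0\le a<b\le 2k$ gives exactly $k(k+1)$ opposite-parity pairs. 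All of these are inversions in $\sigma$ (decreasing block) and none are inversions in $\bar\sigma$ (increasing block), so $L(\bar\sigma)=L(\sigma)-k(k+1)$, yielding the factor $x^{k(k+1)}$.

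**The main obstacle and conclusion.** The routine part is the existence of the involution; the delicate point is the two combinatorial counts for $L$, namely verifying that among the internal pairs exactly $k(k+1)$ have opposite parity. I would confirm this by the direct count: the number of pairs $0\le a<b\le 2k$ with $b-a$ odd is $\sum_{d \text{ odd},\,1\le d\le 2k}(2k+1-d)=k(k+1)$. Given the bijection and the two exponent computations, the $C_{n,\pm}$ identity follows termwise. Summing over the even and odd chessboard subsets gives the statement for $C_n$, and then Lemma~\ref{chdes} (applied to both $\D^I_J$ and $\D^{I\cup K}_{J\setminus K}$, whose intersecting sets remain disjoint since $K\subseteq J$ moves wholesale into $I$) transfers the identity to $S_n$, proving the ``In particular'' clause.
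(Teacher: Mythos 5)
Your proof is correct and is essentially the paper's own argument: the same block-reversal involution between $\D^I_J$ and $\D^{I\cup K}_{J\setminus K}$, with $\ell$ changing by $k(2k+1)$ and $L$ by $k(k+1)$ (the paper obtains the latter from Proposition~\ref{baspro} applied to the reversed block, rather than by your direct count of odd-distance pairs --- an immaterial difference). One small wording caveat: an outside value need not be larger or smaller than the \emph{entire} block; the correct reason cross-block ordinary and odd inversions are preserved is that the reversal permutes the block's values within each position-parity class, a fact you do in effect establish when checking that chessboard type is preserved.
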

\begin{proof}
We have  
\[ \sum_{\sigma \in \D^{I}_{J}(C_{n,+})}{(-1)^{\ell (\sigma )}x^{L(\sigma )}}=\sum_{\tau\in \D ^{I\cup K}_{J\setminus K}(C_{n,+})}{(-1)^{\ell (\bar{\tau} )}x^{L(\bar{\tau} )}},\]
where $\bar{\tau}:=[\tau(1),\ldots,\tau(i-1),\tau(i+2k),\ldots,\tau(i+1),\tau(i),\tau(i+2k+1),\ldots,\tau(n)]$. But $\ell(\bar\tau)=\ell(\tau)+ (2k+1)k$  and, by Proposition~\ref{baspro} $ L(\bar{\tau})=L(\tau)+k(k+1)$, thus
\[\sum_{\tau\in \D ^{I\cup K}_{J\setminus K}(C_{n,+})}{(-1)^{\ell (\bar{\tau} )}x^{L(\bar{\tau} )}}= (-1)^k x^{k(k+1)}\sum_{\tau\in \D ^{I\cup K}_{J\setminus K}(C_{n,+})}{(-1)^{\ell (\tau )}x^{L(\tau )}}\]
as desired. Similarly for $C_{n,-}$.\qedhere
\end{proof}

In a similar way, it is easy to determine the generating function on the descent class obtained by transforming all the descents  into ascents, and conversely, as shown in the following result.
\begin{pro}\label{destoas2}
Let $I,\,J \subseteq [n-1]$, $I \cap J = \emptyset$. Then
\[
\sum_{\sigma \in \D^{I}_{J}(S_n)}{(-1)^{\ell(\sigma)}x^{L(\sigma)}}=(-1)^{\ell(w_0)}x^{L(w_0)} \sum_{\sigma \in \D^{J}_{I}(S_n)}{(-1)^{\ell(\sigma)}\left(\frac{1}{x}\right)^{L(\sigma)}}.\]
\end{pro}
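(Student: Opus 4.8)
The plan is to produce the identity from a single explicit bijection, namely left multiplication by the longest element $w_0$, and then to track how the two statistics $\ell$ and $L$ transform under it. Since $w_0(i)=n+1-i$, we have $(w_0\sigma)(i)=n+1-\sigma(i)$ for every $i$, so $\sigma(i)>\sigma(i+1)$ holds if and only if $(w_0\sigma)(i)<(w_0\sigma)(i+1)$; hence $D(w_0\sigma)=S\setminus D(\sigma)$, i.e.\ left multiplication by $w_0$ complements the descent set. Recalling from Definition~\ref{ddescl} (and the remark preceding Lemma~\ref{chdes}) that $\sigma\in\D^I_J(S_n)$ means precisely $J\subseteq D(\sigma)$ and $I\cap D(\sigma)=\emptyset$, this complementation interchanges the roles of the forced ascents $I$ and the forced descents $J$: one checks at once that $I\subseteq S\setminus D(\sigma)=D(w_0\sigma)$ and $D(w_0\sigma)\cap J=\emptyset$, so $\sigma\in\D^I_J(S_n)$ if and only if $w_0\sigma\in\D^J_I(S_n)$. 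As $w_0^2=e$, the map $\sigma\mapsto w_0\sigma$ is an involution, and in particular a bijection $\D^I_J(S_n)\to\D^J_I(S_n)$.

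Next I would record the effect of this bijection on the statistics. Because $w_0$ is the unique longest element of $S_n$, one has $\ell(w_0\sigma)=\ell(w_0)-\ell(\sigma)$ for all $\sigma\in S_n$; and by Proposition~\ref{baspro}(iii) one has $L(w_0\sigma)=L(w_0)-L(\sigma)$ as well. Setting $\tau:=w_0\sigma$, so that $\sigma=w_0\tau$ and $\tau$ ranges over $\D^J_I(S_n)$ as $\sigma$ ranges over $\D^I_J(S_n)$, these two identities give $(-1)^{\ell(\sigma)}=(-1)^{\ell(w_0)}(-1)^{\ell(\tau)}$ and $x^{L(\sigma)}=x^{L(w_0)}\left(\frac{1}{x}\right)^{L(\tau)}$.

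Finally I would substitute these expressions into $\sum_{\sigma\in\D^I_J(S_n)}(-1)^{\ell(\sigma)}x^{L(\sigma)}$, pull the constant $(-1)^{\ell(w_0)}x^{L(w_0)}$ out of the sum, and reindex by $\tau$ over $\D^J_I(S_n)$, which yields exactly the claimed formula. I expect no serious obstacle: the entire content lies in the descent-set complementation $D(w_0\sigma)=S\setminus D(\sigma)$ and in Proposition~\ref{baspro}(iii). The only point deserving a little care is the exponent bookkeeping, that is, making sure the substitution correctly sends the $L$-weight $x^{L(\sigma)}$ to $x^{L(w_0)}\left(\frac{1}{x}\right)^{L(\tau)}$ (so that $x$ is inverted inside the sum while $x^{L(w_0)}$ appears as a prefactor), and similarly that the Coxeter sign contributes the prefactor $(-1)^{\ell(w_0)}$.
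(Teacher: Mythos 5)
Your proposal is correct and follows exactly the paper's own argument: the paper likewise uses the involution $\sigma\mapsto w_0\sigma$ as a bijection $\D^I_J(S_n)\to\D^J_I(S_n)$ and applies Proposition~\ref{baspro}(iii) together with $\ell(w_0\tau)=\ell(w_0)-\ell(\tau)$ to extract the prefactor $(-1)^{\ell(w_0)}x^{L(w_0)}$ and invert $x$ inside the sum. The only difference is that you spell out the descent-set complementation $D(w_0\sigma)=S\setminus D(\sigma)$, which the paper treats as clear.
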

\begin{proof}
It is clear that the map $\sigma \mapsto w_0 \sigma$ is a bijection from $\D ^{I}_{J}(S_n)$ to $\D^{J}_{I}(S_n)$. Therefore, by Proposition~\ref{baspro} we have
\begin{align*}
\sum_{\sigma \in \D^{I}_{J}(S_n)}\!{(-1)^{\ell(\sigma)}x^{L(\sigma)}}&= \sum_{\tau \in \D^{J}_{I}(S_n)}{(-1)^{\ell(w_0 \tau)}x^{L(w_0 \tau)}}\\
&=(-1)^{\ell(w_0)}x^{L(w_0)}\sum_{\tau \in \D^{J}_{I}(S_n)}{(-1)^{\ell(\tau)}\left(\frac{1}{x}\right)^{L(\tau)}}.\end{align*}\end{proof}
\begin{rmk}\label{rmk:res}
The bijection $\sigma \mapsto w_0 \sigma$ in the proof of Proposition~\ref{destoas2} restricts to a bijection between chessboard elements of the relevant descent classes. In particular, if $n$ is even it is a bijection between $\D^I _J (C_{n,+})$ and  $\D^J _I (C_{n,-})$.
\end{rmk}

The sign-twisted generating function is also invariant under left and right shifting of connected components of the descents, under certain hypotheses. The next two results are analogous to Proposition~\ref{scr} and \ref{scl}, respectively. The first shows that a connected component of odd cardinality of the descents can be shifted (or enlarged of one unit) to the right, as long as it remains a connected component.

\begin{pro}\label{shrdes}
Let $I,\,J\subseteq [n-1]$, $I \cap J = \emptyset$. Let $i \in \mathbb N $, $k \in \N_0$ be such that $[i,i+2k]$ is a connected component of $I \cup J$, $[i,i+2k] \subseteq J$, and $i+2k+2 \notin I\cup J$. Then
\[\sum_{\sigma \in	\D^{I}_{J}(S_n)}{(-1)^{\ell(\sigma)}x^{L(\sigma)}}=\sum_{\sigma \in \D^{I}_{J\cup \tilde{J}}(S_n)}{(-1)^{\ell(\sigma)}x^{L(\sigma)}}=\sum_{\sigma \in \D^{I}_{\tilde{J}}(S_n)}{(-1)^{\ell(\sigma)}x^{L(\sigma)}},\]
where $\tilde{J}:=(J\setminus\{i\})\cup\{i+2k+1\}$.
\end{pro}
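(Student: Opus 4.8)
The plan is to deduce this right-shifting statement for a connected component of the \emph{descents} from the corresponding shifting statement for a component of the \emph{ascents}, namely Proposition~\ref{scr}, by passing through the order-reversing duality $\sigma\mapsto w_0\sigma$ recorded in Proposition~\ref{destoas2}. The key observation is that this duality interchanges the superscript (ascents) and the subscript (descents) of a descent class, so that the component $[i,i+2k]\subseteq J$, which sits among the descents of $\D^I_J(S_n)$, becomes a component sitting among the ascents of the dual class $\D^J_I(S_n)$, where Proposition~\ref{scr} is directly applicable.

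Concretely, first I would apply Proposition~\ref{destoas2} to each of the three descent classes appearing in the statement, obtaining
\[
\sum_{\sigma\in\D^I_J(S_n)}(-1)^{\ell(\sigma)}x^{L(\sigma)}
=(-1)^{\ell(w_0)}x^{L(w_0)}\sum_{\sigma\in\D^J_I(S_n)}(-1)^{\ell(\sigma)}\left(\frac1x\right)^{L(\sigma)},
\]
together with the analogous identities in which the subscript $J$ on the left (and hence the superscript $J$ on the right) is replaced by $J\cup\tilde{J}$ and by $\tilde{J}$. Since the prefactor $(-1)^{\ell(w_0)}x^{L(w_0)}$ is common to all three, it suffices to show that the three dual sums over $\D^J_I(S_n)$, $\D^{J\cup\tilde{J}}_I(S_n)$ and $\D^{\tilde{J}}_I(S_n)$, evaluated at $1/x$, agree. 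Next I would check that the hypotheses of Proposition~\ref{scr} hold with the pair $(J,I)$ in place of $(I,J)$: indeed $J\cap I=\emptyset$, the interval $[i,i+2k]$ is a connected component of $J\cup I$ contained in $J$, and $i+2k+2\notin J\cup I$, which are exactly the hypotheses required, while the shifted set $\tilde{I}=(J\setminus\{i\})\cup\{i+2k+1\}$ produced there is precisely our $\tilde{J}$. Proposition~\ref{scr} therefore gives
\[
\sum_{\sigma\in\D^J_I(S_n)}(-1)^{\ell(\sigma)}x^{L(\sigma)}
=\sum_{\sigma\in\D^{J\cup\tilde{J}}_I(S_n)}(-1)^{\ell(\sigma)}x^{L(\sigma)}
=\sum_{\sigma\in\D^{\tilde{J}}_I(S_n)}(-1)^{\ell(\sigma)}x^{L(\sigma)}.
\]
As this is an identity of polynomials in $x$, it remains valid after the substitution $x\mapsto 1/x$, so the same three equalities hold for the sums evaluated at $1/x$. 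Feeding these back into the three instances of Proposition~\ref{destoas2} yields the desired chain of equalities.

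I do not expect a genuine obstacle here: the argument is a clean reduction via duality, and the only points needing care are bookkeeping. One must correctly transcribe the interchanged data $(J,I)$ into the hypotheses and conclusion of Proposition~\ref{scr} --- in particular checking that $[i,i+2k]$ remains a connected component of $I\cup J$ under the relabelling (it does, the underlying set being unchanged) and that $\tilde{J}$ coincides with the set $\tilde{I}$ produced there --- and one must note that Proposition~\ref{scr} is a polynomial identity, so that the substitution $x\mapsto 1/x$ is legitimate. The well-definedness of the intermediate classes, e.g.\ the disjointness $\tilde{J}\cap I=\emptyset$, is automatic since $i+2k+1\notin I\cup J$.
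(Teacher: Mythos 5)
Your proposal is correct and follows exactly the paper's own argument: the paper likewise applies Proposition~\ref{scr} to the dual data $(J,I)$ to shift the component within the superscript of $\D^J_I(S_n)$, and then transfers the identity back via Proposition~\ref{destoas2}. Your write-up merely spells out the bookkeeping (the $x\mapsto 1/x$ substitution and the disjointness checks) that the paper leaves implicit.
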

\begin{proof}
By Proposition~\ref{scr} we have 
\[\sum_{\sigma \in \D^{J}_{I}(S_n)}{(-1)^{\ell(\sigma)}x^{L(\sigma)}}=\sum_{\sigma \in \D^{J\cup \tilde{J}}_{I}(S_n)}{(-1)^{\ell(\sigma)}x^{L(\sigma)}}=\sum_{\sigma \in \D^{\tilde{J}}_{I}(S_n)}{(-1)^{\ell(\sigma)}x^{L(\sigma)}}\]
so the result follows from Proposition~\ref{destoas2}.\qedhere
\end{proof}

In a similar way, using Proposition~\ref{scl}, we obtain the following invariance result under left shifting of a connected component of odd cardinality of the descents.
\begin{pro}\label{shldes}
Let $I,\,J \subseteq [n-1]$, $I \cap J = \emptyset$, and $i \in \mathbb N$, $k \in \N_0$ be such that 
$[i+1,i+2k+1]$ is a connected component of $I \cup J$, $[i+1,i+2k+1] \subseteq J$, and 
$i-1 \notin I\cup J$. Then
\[\sum_{\sigma \in	\D^{I}_{J}(S_n)}{(-1)^{\ell(\sigma)}x^{L(\sigma)}}=\sum_{\sigma \in \D^{I}_{J\cup \bar{J}}(S_n)}{(-1)^{\ell(\sigma)}x^{L(\sigma)}}=\sum_{\sigma \in \D^{I}_{\bar{J}}(S_n)}{(-1)^{\ell(\sigma)}x^{L(\sigma)}},\]
where $\bar{J}:=(J\setminus\{i+2k+1\})\cup\{i\}$.
\end{pro}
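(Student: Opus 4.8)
The plan is to mirror the reasoning used to pass from Proposition~\ref{scr} to Proposition~\ref{shrdes}, but invoking the \emph{left} shifting result (Proposition~\ref{scl}) in place of the right one. The point is that Proposition~\ref{scl} is stated only for connected components contained in the ascent set $I$, so to handle a connected component $[i+1,i+2k+1]$ contained in the descent set $J$, I would first swap the roles of ascents and descents using the involution $\sigma \mapsto w_0\sigma$ encoded in Proposition~\ref{destoas2}.

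Concretely, I would proceed as follows. Given the hypotheses on $I,J,i,k$, first observe that $[i+1,i+2k+1]$ is a connected component of $J \cup I$ with $[i+1,i+2k+1]\subseteq J$ and $i-1\notin I\cup J$; these are exactly the hypotheses of Proposition~\ref{scl} with the roles of $I$ and $J$ interchanged. Applying Proposition~\ref{scl} to the pair $(J,I)$ therefore yields
\[
\sum_{\sigma \in \D^{J}_{I}(S_n)}(-1)^{\ell(\sigma)}x^{L(\sigma)}
= \sum_{\sigma \in \D^{J\cup \bar{J}}_{I}(S_n)}(-1)^{\ell(\sigma)}x^{L(\sigma)}
= \sum_{\sigma \in \D^{\bar{J}}_{I}(S_n)}(-1)^{\ell(\sigma)}x^{L(\sigma)},
\]
where $\bar{J}=(J\setminus\{i+2k+1\})\cup\{i\}$ is precisely the set named in the statement. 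Then I would apply Proposition~\ref{destoas2} to each of the three descent classes appearing above: it converts each generating function over $\D^{J}_{I}(S_n)$, $\D^{J\cup\bar J}_{I}(S_n)$, $\D^{\bar J}_{I}(S_n)$ into the corresponding generating function over $\D^{I}_{J}(S_n)$, $\D^{I}_{J\cup\bar J}(S_n)$, $\D^{I}_{\bar J}(S_n)$, multiplying each by the common factor $(-1)^{\ell(w_0)}x^{L(w_0)}$ and substituting $1/x$ for $x$. Since this factor and the substitution are identical across all three sums, the chain of equalities is preserved, and cancelling the common factor gives exactly the three-term identity claimed in Proposition~\ref{shldes}.

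The only point requiring a little care is verifying that the hypotheses transfer cleanly under the interchange of $I$ and $J$: I must check that $I\cap J=\emptyset$ is symmetric (it is), that $(I\cup\bar I)\cap J=\emptyset$ in the reindexed setting, and that the connectivity and boundary conditions ($[i+1,i+2k+1]$ a connected component of $I\cup J$, contained in $J$, with $i-1\notin I\cup J$) are literally the conditions Proposition~\ref{scl} demands once $I$ and $J$ are swapped. None of this is an obstacle; it is bookkeeping. The main conceptual work was already done in establishing Proposition~\ref{destoas2} and Proposition~\ref{scl}, so the proof here is genuinely a two-line deduction, exactly as the preamble sentence ``using Proposition~\ref{scl}'' suggests. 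I therefore expect no real difficulty, and the proof will be shorter than that of Proposition~\ref{shrdes}, which is why the authors phrase it as an immediate analogue.
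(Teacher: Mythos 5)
Your proposal is correct and is precisely the argument the paper intends: apply Proposition~\ref{scl} with the roles of $I$ and $J$ interchanged, then transfer back via the involution $\sigma\mapsto w_0\sigma$ of Proposition~\ref{destoas2}, exactly as the paper does when deducing Proposition~\ref{shrdes} from Proposition~\ref{scr}. No gaps; the hypothesis checks you flag (in particular $i\notin I\cup J$, so $\bar{J}\cap I=\emptyset$) all go through.
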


Computer calculations suggest that the operation of shifting can be performed under weaker hypotheses, namely even if the connected component to be shifted is not contained in $I$ (as required in Proposition~\ref{scr}) and therefore not contained in $J$ (as in Proposition~\ref{shrdes}). 
More precisely, we conjecture the following.
\begin{con}
	Let $I,J \subseteq [n-1]$, $I \cap J = \emptyset$. Let $i \in {\mathbb N}$, $k \in {\mathbb N_0}$ be such that $i+2k+2 \not \in I\cup J$ and
	$[i,i+2k]$ is a connected component of $I \cup J$, say $[i,i+2k]=A \cup B$, where $A \subseteq I$ and $B \subseteq J$. 
Then
	$$
	\sum_{\sigma \in  \D ^I_ J(S_n)}(-1)^{\ell (\sigma )}x^{L(\sigma )} =	\sum_{\sigma \in  \D ^{ \tilde{I}}_ {\tilde{J}} (S_n)}(-1)^{\ell (\sigma )}x^{L(\sigma )}
	$$
where $\tilde{I}:=(I\setminus A) \cup(A +1)$ and $\tilde{J}:=(J\setminus B) \cup (B +1)$.	
\end{con}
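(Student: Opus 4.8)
The plan is to reduce the identity to a purely local statement inside the block of positions spanned by the component, and then to establish that statement by a sign--reversing involution generalizing the one used in Proposition~\ref{scr}. First I would pass to chessboard elements: applying Lemma~\ref{chdes} to each of the two descent classes, it suffices to prove
\[
\sum_{\sigma \in \D^I_J(C_n)} \f \;=\; \sum_{\sigma \in \D^{\tilde I}_{\tilde J}(C_n)} \f,
\]
and, since $C_n = C_{n,+}\cup C_{n,-}$, to prove the analogous equality over $C_{n,+}$ and over $C_{n,-}$ separately. This reduction is the decisive simplification: for $\sigma\in C_{n,\pm}$ the parity of $\sigma(p)$ is determined by that of $p$, so $p\not\equiv q \pmod 2$ is equivalent to $\sigma(p)\not\equiv\sigma(q)\pmod2$, and hence $L(\sigma)$ counts precisely the inversions of $\sigma$ between values of opposite parity. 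Thus the parity condition built into $L$ becomes a condition on values rather than on the (shifting) positions.

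Next I would localize to the window $P:=[i,i+2k+2]$. By hypothesis the adjacencies $(i-1,i)$ and $(i+2k+2,i+2k+3)$ are unconstrained in both classes, and $\tilde I\cup\tilde J$ agrees with $I\cup J$ outside $P$; moreover one checks directly that the constrained adjacencies inside $P$ carry the \emph{same} pattern of ascents and descents, merely translated one step to the right. The key observation is that, for a chessboard element, the contribution to $\ell$ and to $L$ coming from pairs with one position in $P$ and one outside depends only on the \emph{set} of values occupying $P$, not on their arrangement: for an outside value $\sigma(q)$ the number of (opposite--parity) window values lying on the relevant side of $\sigma(q)$ is determined by that set, using parity--locking for the odd--length count. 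Fixing the assignment on the complement of $P$, both sums factor as a common ``outside'' weight times an internal sum over arrangements of the fixed value set $V$ on $P$, and it remains to prove, for each admissible $V$,
\[
\sum (-1)^{\ell_{\mathrm{in}}}x^{L_{\mathrm{in}}} \;=\; \sum (-1)^{\ell_{\mathrm{in}}}x^{L_{\mathrm{in}}},
\]
where the left-- (resp.\ right--) hand sum runs over the chessboard fillings of $P$ by $V$ with the pattern left--anchored, i.e.\ free adjacency at the right end (resp.\ right--anchored, free adjacency at the left end), and $\ell_{\mathrm{in}},L_{\mathrm{in}}$ count inversions, respectively opposite--parity inversions, internal to $P$. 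This local identity is exactly the statement in the case $n=2k+3$ with $I\cup J=[1,2k+1]$ a single connected component, so the reduction shows that it suffices to treat single--component descent classes.

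Finally I would attack the local identity by a cancellation argument generalizing Proposition~\ref{scr}. One slides the ``free value'' through the pattern and pairs up fillings by transposing its value with that of a suitable equal--parity position---an operation legitimate on chessboard elements, which reverses $(-1)^{\ell_{\mathrm{in}}}$ while preserving $L_{\mathrm{in}}$, exactly as in the computation $\ell(\tilde\sigma)=\ell(\sigma)+1$, $L(\tilde\sigma)=L(\sigma)$ of Proposition~\ref{scr}---so that all terms cancel except those comprising the oppositely anchored class. When the pattern is constant this is precisely Proposition~\ref{scr} (or, dually, Proposition~\ref{shrdes}), which I would use as the base case; organizing the general pattern by induction on the number of ascending and descending runs, the inductive step amounts to carrying the free value across a single run. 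The main obstacle is the junction between a run of ascents and a run of descents: there the monotone structure that makes the pairing in Proposition~\ref{scr} fixed--point--free breaks down, and one must identify the correct transposition (governed by the local peak/valley at the seam) and verify that the surviving, uncancelled fillings are exactly those of the shifted class. I expect this seam analysis---equivalently, the two--run case of the local identity---to be the crux, with Propositions~\ref{zero}, \ref{destoas} and \ref{destoas2} supplying the bookkeeping for the signs and powers of $x$ incurred whenever a descending run is involved.
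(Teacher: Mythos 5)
First, a point of order: the statement you are proving is presented in the paper as a \emph{conjecture}, supported only by computer calculations, so there is no proof in the paper to compare yours against; the paper proves only the special cases in which the component lies entirely in $I$ (Propositions~\ref{scr} and \ref{scl}) or entirely in $J$ (Propositions~\ref{shrdes} and \ref{shldes}). Your first two steps are sound and consistent with the paper's methods: Lemma~\ref{chdes} reduces the identity to chessboard elements, and for $\sigma\in C_n$ the condition $p\not\equiv q\pmod 2$ is equivalent to $\sigma(p)\not\equiv\sigma(q)\pmod 2$, so the contributions to $\ell$ and $L$ from pairs straddling the window $[i,i+2k+2]$ indeed depend only on the set of values occupying the window; since both end adjacencies of the window are unconstrained in both descent classes, the problem does localize.

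Nevertheless the proposal does not constitute a proof, for two reasons. The decisive one is that you explicitly leave open the ``seam analysis'' at the junction between an ascending run and a descending run, saying only that you expect a suitable transposition to exist. That seam is precisely the new difficulty separating the conjecture from the proved cases: when the component is monotone, the sliding argument of Proposition~\ref{scr} works because the values in the constrained block are sorted, which pins down where the free value can sit and makes the pairing fixed-point-free; at an internal peak or valley this structure disappears, and no involution is exhibited. Until that step is supplied there is no argument. Secondly, the local identity you reduce to is not ``exactly the statement in the case $n=2k+3$'': the window value set $V$ is an arbitrary set containing $k+2$ values of one parity and $k+1$ of the other, and $L_{\mathrm{in}}$ depends on the parity pattern of $V$ in sorted order, which need not alternate as it does when $V=[2k+3]$. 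So the localization in fact reduces the conjecture to a strictly more general family of local identities indexed by parity words, all of which would have to be proved, not just the single-component instance of the conjecture itself.
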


\vspace{1em}

\section{Descent classes}\label{sec:unmixed}

In this section we investigate the sign-twisted generating function of the odd length over descent classes.
More precisely, we give sufficient conditions on a descent class for the generating function to be zero,
and we compute it explicitly for the alternating permutations and for a general family of descent
classes which includes all quotients.
%we prove one of the main results of this paper. Namely we give an explicit closed product formula for
%the generating function of $(-1)^{\ell(\sigma)}x^{L(\sigma)}$ over any descent class $\D ^I_J (S_n)$ for which the union of the connected components of $I$ and those of $J$ coincide with the connected components  of $I\cup J$. The proof of this result builds on Theorem~\ref{thmA}.

Let $I,J \subseteq [n-1]$, $I \cap J = \emptyset$, and $i \in [n]$. We say that $i$ is a 
{\em peak} of $ \D ^I_ J(S_n) $ if  $ i \in (I+1) \setminus I $ or $ i \in J \setminus (J+1) $.
Similarly, $ i $ is a \textit{valley} if 
 $ i \in I \setminus (I+1) $ or $ i \in (J+1) \setminus J $.

\begin{pro}
\label{zero}
Let $I,J \subseteq [n-1]$, $I \cap J = \emptyset$, and $i \in {\mathbb N}$, $k \in {\mathbb N_0}$ be
such that $[i,i+2k+1]$ is a connected component of $I \cup J$ and $ v \not \equiv p \pmod{2} $ 
for any $v,p \in [i,i+2k+2]$, $v$ valley, $p$ peak. Then 
\[
\sum_{\sigma \in  \D ^I_ J(S_n)}(-1)^{\ell (\sigma )}x^{L(\sigma )} = 0.
\]
\end{pro}
 \begin{proof}
Let $ \sigma \in  \D ^I_ J(S_n)$. Let $ \{ a_1, \ldots , a_{2k+3}
\}_{<} := \{ \sigma(i), \sigma(i+1), \ldots , \sigma(i+2k+2) \} $. Let
$ v := \sigma^{-1}(a_1). $  Then $v$ is a valley (for if $i<v<i+2k+2$ then  
$ \sigma(v-1) > \sigma(v) < \sigma(v+1) $ so $ v \in I \cap (J+1) $,
while  if $v=i$ then 
$\sigma(v) < \sigma(v+1) $ so $v \in I \setminus (I+1)$, and if $v=i+2k+2$ then 
$\sigma(v-1) > \sigma(v)$ so $ v \in (J+1) \setminus J$).
Similarly,  $\sigma^{-1}(a_{2k+3})$ is a peak. Therefore, by our
hypotheses,  $ \sigma^{-1}(a_1) \not \equiv \sigma^{-1}(a_{2k+3}) \pmod{2}$.

Let $j := \mbox{min} \{ r \in [2k+2] : \sigma^{-1}(a_r) \equiv \sigma^{-1}(a_{r+1}) \pmod{2} \}$ (note that
$j$ certainly exists for if $\sigma^{-1}(a_1) \not \equiv \sigma^{-1}(a_2) \not \equiv \cdots  \not \equiv\sigma^{-1}(a_{2k+3}) \pmod{2}$ then  $ \sigma^{-1}(a_1) \equiv \sigma^{-1}(a_{2k+3}) \pmod{2}$ which
is a contradiction), and $\hat{\sigma} := (a_j, a_{j+1}) \, \sigma$. Then $\hat{\sigma} \in \D ^I_ J(S_n)$,
$\ell(\hat{\sigma})=\ell(\sigma) \pm 1$, $L(\hat{\sigma})=L(\sigma)$ and the map $\sigma \mapsto \hat{\sigma}$ is an involution. The result follows. \qedhere\end{proof}

Note that the converse of the previous result does not hold. For example, 
if $n=8$, $I=\{1,2,4\}$, and $J=\{3,5,6\}$  then the sign-twisted generating function for
$\D ^I_ J(S_8)$ is zero but $\D ^I_ J(S_8)$ has peaks $\{3,5\}$ and valleys $\{1,4,7\}$.
On the other hand, under the weaker hypothesis that there exist at
least one peak and one valley  with different parities the generating
function is not, in general, zero.  For example, if  
 $n=8$, $I=\{1,2,4\}$, and $K=\{3,5,6,7\}$ then $\D ^I_ K(S_8)$ has peaks $\{3,5\}$ and valleys $\{1,4,8\}$
 but the corresponding generating function is $-x^6(1+x^2+x^4)$.
It would be interesting to find necessary and sufficient conditions on
$I$ and $J$  for the sign-twisted generating function on $\D ^I_ J(S_n)$ to be zero. 

Proposition~\ref{zero} implies that if $I \cup J$ has a ``zig-zag''
connected component $K$  of even cardinality 
(i.e., if all even elements of $K$ are in $I$ and all odd ones are in
$J$, or conversely)  then 
the corresponding sign-twisted generating function is zero. Thus, this is in
particular  true for the alternating permutations of a symmetric group
of odd degree.  This makes it natural to investigate the corresponding
generating function  for all alternating permutations. For $n \in {\mathbb N}$ we let 
\[
E_n^{-} := \{ \sigma \in S_n : \, \sigma(1)>\sigma(2)<\sigma(3)> \cdots  \},
\]
and
\[
E_n^{+} := \{ \sigma \in S_n : \, \sigma(1)<\sigma(2)>\sigma(3)< \cdots  \}.
\]
We call the elements of $E_n^{-}$ (resp. $E_n^{+}$)  \textit{alternating} 
(resp.\ \textit{reverse alternating}) permutations
(we refer the reader to, e.g., \cite[\S 1.6]{StaEC2} for further information about alternating permutations).

\begin{pro}
Let $n \in {\mathbb N}$. Then 
\begin{align}
\sum_{\sigma \in E^{-}_{n}}(-1)^{\ell (\sigma )}x^{L(\sigma )} &=
\left\{ \begin{array}{ll}
0, & \mbox{if $n \equiv 1 \pmod{2}$}, \\ 
(-x)^{\frac{n}{2}}, & \mbox{if $n \equiv 0 \pmod{2}$,}
\end{array} \right.  
\label{altminus} 
\end{align}
and 
\begin{align}
\sum_{\sigma \in E^{+}_{n}}(-1)^{\ell (\sigma )}x^{L(\sigma )} &=
\left\{ \begin{array}{ll}
0, & \mbox{if $n \equiv 1 \pmod{2}$}, \\ 
x^{\frac{n}{2}(\frac{n}{2}-1)}, & \mbox{if $n \equiv 0 \pmod{2}$.}
\end{array} \right.  
\label{altplus} 
\end{align}
\end{pro}
\begin{proof}
Note that $E_n^{-} = \D_J^I(S_n)$ where $I := \{ i \in [n-1] : \, i \equiv 0 \pmod{2} \}$
and $J := \{ i \in [n-1] : \, i \equiv 1 \pmod{2} \}$ so the first equation in (\ref{altminus}) 
follows from Proposition~\ref{zero}. 
So assume that $n \equiv 0 \pmod{2}$, say $n=2m$ for some $m \in \N$.
By Lemma \ref{chdes} we have 
\[
\sum_{\sigma \in E_n^{-}} \f =\sum_{\sigma \in \D_J^I(C_n)} \f.
\]
We claim that $\D_J^I(C_{n,+})=\emptyset$. Let $\sigma \in \D_J^I(C_{n,+})$.
Let $i := \sigma^{-1}(1)$. Then $i \equiv 1 \pmod{2}$ so $i \in J$ and hence 
$\sigma(i) > \sigma(i+1)$ which is a contradiction.
Let now $\sigma \in \D_J^I(C_{n,-})$. We claim that then
\[
\sigma = [2 , 1 , 4 , 3 , 6 , 5, \dots, 2m , 2m-1].
\]
We prove this claim by induction on $m \in \mathbb N$. If $m=1$ the claim is clear. Let $m \geq 2$.
Let $ a := \sigma^{-1}(2m-1)$. Then $a \equiv 0 \pmod{2}$ so $a=2m$ (else 
$\sigma(a-1),\sigma(a+1)>\sigma(a)=2m-1$) and hence $\sigma(2m-1)=2m$. But 
$\sigma_{|[2m-2]} \in \D_{J \cap [n-3]}^{I \cap [n-3]}(C_{n-2,-})$ so the claim follows by induction.
Since $\ell([2 , 1 , 4 , 3, \dots, 2m , 2m-1])=m=L([2, 1 , 4 , 3, \dots 2m , 2m-1])$
the second equation in (\ref{altminus}) follows. 

Since the map $\sigma \mapsto w_0 \, \sigma$ is an involution between $E_n^+$ and $E_n^{-}$,
the equations in (\ref{altplus}) follow from those in (\ref{altminus}) and Proposition~\ref{baspro}.
\qedhere\end{proof}

We now consider a general family of descent classes which includes all quotients.
Let $I,J\subseteq [n-1]$. We say that $I$ and $J$ are {\em unmixed} if 
\begin{equation}
\label{hypdes}
I\cap J =(I+1)\cap J=I\cap (J+1)=\emptyset.
\end{equation}

Let $I,J \subseteq [n-1]$ be unmixed.
Let $I_1, \ldots, I_s$ be the connected components of $I$ and $J_1,\ldots, J_t$ be those of $J$.
We say that $(I,J)$ is \textit{compressed} if $|I_1|\equiv
\cdots\equiv |I_s|\equiv |J_1|\equiv \cdots \equiv |J_t|\equiv 1 \pmod
2$ and $|[n-1]\setminus (I\cup J)|=s+t-1$. For instance,
$(\{ 1,7,8,9 \}, \{ 3,4,5,11,12,13\})$ is compressed for $n=14$ while $(\{ 1,3 \}, \{ 7,8,9,11,12,13\})$ 
is not. Note that if $I,J \subseteq [n-1]$  are unmixed and  $(I,J)$ is compressed then $ n-1=|I|+|J|+s+t-1 \equiv 1 \pmod{2}$ so $n$ is even.

Let now $n =2m \in \mathbb N$ and let $I,J$ be unmixed with connected components $I_1, \ldots, I_s$, and $ J_1,\ldots, J_t$, respectively.
Then $I_1, \ldots, I_s, J_1,\ldots, J_t$ are the connected components of $I \cup J$. Therefore
$ \sum_{j=1}^{s}\left( \frac{|I_j|+1}{2} \right) + \sum_{k=1}^{t}\left( \frac{|J_k|+1}{2} \right) \leq m$, with equality holding if and only if
 $(I,J)$ is compressed.

We can now state one of the main results of this section.

\begin{thm}\label{mainchess}
Let $I,J\subseteq [n-1]$ be unmixed. Let $I_1, \ldots, I_s$ be the connected components of~$I$ and 
$J_1,\ldots, J_t$ be the connected components of $J$. Then
we have
\begin{equation}\label{odd}
\sum_{\sigma \in \D^{I}_{J}(C_{n,+})}{(-1)^{\ell(\sigma)}x^{L(\sigma)}}=
(-x)^{\dt}x^{\alpha(J)} \mcdesa {\displaystyle \prod_{k=b+d+1}^{m}(1-x^{2k})},
\end{equation}
if $n$ is odd, while
\begin{align}
\sum_{\sigma \in \D^{I}_{J}(C_{n,\,+})}(-1)^{\ell (\sigma )}x^{L(\sigma )} &=
\left\{ \begin{array}{ll}
(-x)^{\dt}x^{\alpha(J)}\frac{[\bt]_{x^2}}{[m]_{x^2}} \mcdesa, & \mbox{if $m=b+d$}, \\ 
(-x)^{\dt}x^{\alpha(J)} \mcdesa {\displaystyle \prod_{k=b+d+1}^{m-1} (1-x^{2k})}, & \mbox{otherwise,}
\end{array} \right.  
\label{evplus} 
\end{align}
and 

\begin{align}
\sum_{\sigma \in \D^{I}_{J}(C_{n,\,-})}(-1)^{\ell (\sigma )}x^{L(\sigma )} &=
\left\{ \begin{array}{ll}
(-1)^{\dt}x^{\bt+\alpha(J)}\frac{[\dt]_{x^2}}{[m]_{x^2}} \mcdesa, & \mbox{if $m=b+d$}, \\ 
-(-x)^{\dt}x^{m+\alpha(J)} \mcdesa {\displaystyle \prod_{k=b+d +1}^{m-1} (1-x^{2k})}, & \mbox{otherwise,}
\end{array} \right.  
\label{evminus} 
\end{align}
if $n$ is even, where $m:=\left\lfloor \frac{n}{2}\right\rfloor$,
$b_j:= \left\lfloor\frac{|I_j|+1}{2}\right\rfloor$, for $j=1,\ldots, s$, $d_k:=\left\lfloor\frac{|J_k|+1}{2}\right\rfloor$, for $k=1,\ldots, t$, $\bt:=\sum_{i=1}^{s} b_i$, $\dt:=\sum_{k=1}^{t} d_k$,% $M := \bt + \dt$,
${\bf b}:=b_1,\ldots,b_s$, ${\bf d}:=d_1,\ldots, d_t$, and $\alpha(J):=\sum_{k=1}^t d_k^{2}$. 
\end{thm}
\begin{proof}
We let, for convenience, $\bar{b}_j:=b_j +1, \;\;  \bar{d}_k:=d_k+1$, for $j \in [s]$ and $k \in [t]$,
$\hat\alpha(J):=\alpha(J)+\dt$, and $\check\alpha(J):=\alpha(J)-\dt$.

Before delving into the proof we think it useful to sketch the idea of it. If $J$ has at least one 
connected component of even size then by Lemma \ref{destoas} this can be changed to a connected component 
of $I$ and we can proceed by induction. If $I$ has a connected component of even size (and all 
connected components of $J$ have odd size) then by Propositions \ref{scr} and \ref{scl} we can
remove one of the endpoints from this connected component and then shift some of the other connected 
components of $I$ and $J$ so that the resulting ``empty spot'' sits next to a connected component of
$J$, to which it can then be ``added'' by Propositions \ref{shrdes} or \ref{shldes}. The resulting
descent class now has a connected component of the descents of even size so can be computed by induction as 
in the previous case. If all the connected components of $I \cup J$ are of odd size but $(I,J)$ is not
compressed then there is either an ``empty spot'' to the right of the rightmost connected component of 
$I \cup J$, or to the left of the leftmost, or there are two consecutive connected components of 
$I \cup J$ separated by at least two empty spots. By shifting the connected components of $I \cup J$
we can ``move'' this extra empty spot so that it sits next to a connected component of $J$, to which it 
can then be ``added'', and we can conclude as in the previous case. If $(I,J)$ is compressed then $n$ is
even and must appear immediately to the right of a connected component of the ascents which allows us to
``delete'' $n$ and compute the generating function as a sum of generating functions of unmixed descent 
classes of $S_{n-1}$.

We proceed by induction on $t\in \N_0$, the number of connected components of the descents. 
Let $t=0$ (i.e., $J=\emptyset$). Then $ (I, \emptyset)$ is compressed if and only if $ \bt = \frac{n}{2}$
 so Theorem \ref{main} reduces to Theorem~\ref{thmA} in this case.
Let now $t \geq 1$. 

Assume first that there exists $i\in [t]$ such that $|J_i|\equiv 0 \pmod 2$. 
Then by Lemma~\ref{destoas} and our induction hypothesis we have
\begin{align*}
&\sum_{\sigma \in \D^{I}_{J}(C_{n,+})}{\!\!(-1)^{\ell(\sigma)}x^{L(\sigma)}}\!\!=\!(-1)^{\frac{|J_i|}{2}} x^{\frac{|J_i|(|J_i|+2)}{4}} \sum\limits_{\sigma \in \D^{I\cup{J_i}}_{J\setminus J_i}(C_{n,+})}{(-1)^{\ell(\sigma)}x^{L(\sigma)}}\\
&= (-1)^{d_{i}} x^{d_i \bar{d}_i} (-1)^{\dt-d_i} x^{\hat\alpha(J)- d_i \bar d _i} \mcdes \prod\limits_{k=\bt+\dt+1}^{\left\lfloor\frac{n-1}{2}\right\rfloor}(1-x^{2k}),
\end{align*}
so \eqref{odd} and the second formula in \eqref{evplus} follow in this case.

Under the same hypothesis, for the odd chessboard elements we have
\begin{align*}
&\sum_{\sigma \in \D^{I}_{J}(C_{2m,-})}{(-1)^{\ell(\sigma)}x^{L(\sigma)}}=(-1)^{\frac{|J_i|}{2}} x^{\frac{|J_i|(|J_i|+2)}{4}} \sum\limits_{\sigma \in \D^{I\cup{J_i}}_{J\setminus J_i}(C_{2m,-})}{(-1)^{\ell(\sigma)}x^{L(\sigma)}}\\
&= -(-1)^{d_{i}} x^{d_i \bar{d}_i} (-1)^{\dt - d_i} x^{m+\hat\alpha(J)-d_i\bar d _i} \mcdes \prod\limits_{k=\bt+\dt+1}^{m-1}(1-x^{2k}),
\end{align*}
yielding the second formula in~\eqref{evminus}.

We may therefore assume that $|J_1|\equiv|J_2|\equiv\cdots\equiv|J_t|\equiv 1\pmod 2$.

Assume now that there exists $r\in [s]$ such that $|I_r|\equiv 0 \pmod 2$. Then by repeated application of Proposition~\ref{shrdes} and \ref{shldes}, we have 
\[\sum_{\sigma \in \D^{I}_{J}(C_n,\,\pm)}{(-1)^{\ell(\sigma)}x^{L(\sigma)}}=\sum_{\sigma \in \D^{\widetilde{I}}_{\widetilde{J}}(C_n,\,\pm)}{(-1)^{\ell(\sigma)}x^{L(\sigma)}},\]
where $\widetilde{I}$ has connected components $\widetilde{I}_1\cup\cdots\cup\widetilde{I}_s,$ where $|\widetilde{I}_r|=|I_r|-1$ and $|\widetilde{I}_k|=|I_k|$, for $k\in [s]\setminus\{r\}$ and $\widetilde{J}$ has connected components $\widetilde{J}_1\cup\cdots\cup\widetilde{J}_t,$ where $ |\widetilde{J}_1|=|J_1|+1$  and $ |\widetilde{J}_k|=|J_k|,$ for $k\in [2,t],$ and the connected components of $\widetilde{I}\cup\widetilde{J}$ are $\widetilde{I}_1, \ldots, \widetilde{I}_s, \widetilde{J}_1,\ldots ,\widetilde{J}_t$.
Since $\widetilde{J}$ has a connected component of even cardinality, reasoning as in the previous case, 
and observing that    $\left\lfloor\frac{|\widetilde{J}_1|+1}{2}\right\rfloor=
\left\lfloor\frac{|J_1|+1}{2}\right\rfloor=d_1$ and $\left\lfloor\frac{|\widetilde{I}_r|+1}{2}
\right\rfloor=\left\lfloor\frac{|I_r|+1}{2}\right\rfloor=b_r$, we conclude again by induction.
%that
%\begin{align*}
%&\sum\limits_{\sigma \in \D^{\widetilde{I}}_{\widetilde{J}}(C_{n,\,+})}{(-1)^{\ell(\sigma)}x^{L(\sigma)}}=(-1)^{\frac{|\widetilde{J}_1|}{2}} x^{\frac{|\widetilde{J}_1|(|\widetilde{J}_1|+2)}{4}} \sum\limits_{\sigma \in \D^{\widetilde{I}\cup{\widetilde{J}_1}}_{\widetilde{J}\setminus \widetilde{J}_1}(C_{n,+})}{(-1)^{\ell(\sigma)}x^{L(\sigma)}} \\
%&= (-1)^{d_{1}} x^{d_1 \bar{d}_1} (-1)^{\dt-d_1} x^{\hat\alpha(J)-d_1 \bar d _1} \mcdes \prod\limits_{k=\bt+\dt+1}^{\left\lfloor \frac{n-1}{2}\right\rfloor}(1-x^{2k}),
%\end{align*}
%and
%\begin{align*}
%&\sum_{\sigma \in \D^{\widetilde{I}}_{\widetilde{J}}(C_{2m,-})}{(-1)^{\ell(\sigma)}x^{L(\sigma)}}=(-1)^{\frac{|\widetilde{J}_1|}{2}} x^{\frac{|\widetilde{J}_1|(|\widetilde{J}_1|+2)}{4}} \sum\limits_{\sigma \in \D^{\widetilde{I}\cup{\widetilde{J}_1}}_{\widetilde{J}\setminus \widetilde{J}_1}(C_{2m,-})}{(-1)^{\ell(\sigma)}x^{L(\sigma)}}\\
%&=- (-1)^{d_{1}} x^{d_1 \bar{d}_1} (-1)^{\dt-d_1} x^{m+\hat\alpha(J) -d_1 \bar d _1}  \mcdes \prod\limits_{k=\bt+\dt+1}^{m-1}(1-x^{2k}),
%\end{align*}
% so the result again follows.

We may therefore assume that  $|I_1|\equiv\cdots\equiv|I_s|\equiv|J_1|\equiv\cdots\equiv|J_t|\equiv 1\pmod 2.$

Suppose first that $|[n-1]\setminus(I\cup J)|>s+t-1$. Therefore either $1\notin I \cup J$ or $n-1 \notin I\cup J$ or there exists $i\in [n-1]$ such that $i,\,i+1 \notin I \cup J.$ In any of these cases we can apply Propositions~\ref{shrdes} and \ref{shldes} to get
\[\sum_{\sigma \in \D^{I}_{J}(C_n,\,\pm)}{(-1)^{\ell(\sigma)}x^{L(\sigma)}}=\sum_{\sigma \in \D^{\bar{I}}_{\bar{J}}(C_n,\,\pm)}{(-1)^{\ell(\sigma)}x^{L(\sigma)}},\]
where $\bar I$ has connected components $\bar{I}_1,\ldots ,\bar{I}_s$ such that 
$|\bar{I}_j|=|I_j|$ for $j\in [s]$ and $\bar{J}$ has connected components $\bar{J}_1,\ldots ,\bar{J}_t$ 
such that $|\bar{J}_1|=|J_1|+1$ and $|\bar{J}_l|=|J_l|$ for $l \in [2,t]$. Then, again, $\bar{J}$, 
has a connected component of even size so, reasoning as above
%we conclude that
%\begin{eqnarray*}
%&\sum_{\sigma \in \D^{\bar{I}}_{\bar{J}}(C_{n,\,+})}{(-1)^{\ell(\sigma)}x^{L(\sigma)}}=(-1)^{\frac{|\bar{J}_1|}{2}} x^{\frac{|\bar{J}_1|(|\bar{J}_1|+2)}{4}} \sum\limits_{\sigma \in \D^{\bar{I}\cup{\bar{J}_1}}_{\bar{J}\setminus \bar{J}_1}(C_{n\,+})}{(-1)^{\ell(\sigma)}x^{L(\sigma)}},
%\end{eqnarray*}
%and 
 \eqref{odd}, and the second equations in \eqref{evplus} and \eqref{evminus} follow by induction, since
$\left\lfloor\frac{|\bar{J}_1|+1}{2}\right\rfloor=\left\lfloor\frac{|J_1|+1}{2}\right\rfloor =d_1$.

\vspace{3mm}

We may therefore assume that $|I_1|\equiv\cdots\equiv|I_s|\equiv|J_1|\equiv\cdots\equiv|J_t|\equiv 1\pmod 2$ and $|[n-1]\setminus(I\cup J)|=s+t-1$, i.e., that $(I,J)$ is compressed. 
Then $n\equiv 0 \pmod 2$, say $n=2m$, and $m=\bt+\dt$, and both the leftmost and the rightmost elements 
of any connected component of $J \cup J$ are odd.

For $i\in [s]$ let $a_i := \max{I_i} +1$ and for $i \in [t]$ let $c_i := \min{J_i}$. Then $a_1\equiv \cdots \equiv a_s \equiv 0 \pmod 2$ and $c_1\equiv \cdots \equiv c_t \equiv 1 \pmod 2$. Therefore, if $\sigma \in \D_J ^I (C_{2m,+})$, then $\sigma^{-1}(2m)\in\{a_1,\ldots,a_s\}$. Hence
\[
\sum_{\sigma \in \D^{I}_{J}(C_{2m,\,+})}{(-1)^{\ell(\sigma)}x^{L(\sigma)}}=
\sum_{j=1}^{s}\,{\sum_{\substack{\sigma \in \D^{I}_{J} (C_{2m,\,+}): \\  \sm (2m)=a_j }} {(-1)^{\ell(\sigma)}x^{L(\sigma)}}}.\]
Fix $j\in [s]$.
Let $k:= \max\{i \in [t]: c_i<a_j\}$ (where $k:=0$ if $\{i \in [t]: c_i<a_j\}=\emptyset$).
So $J_1, \ldots , J_k$ are to the left of $a_j$, while $J_{k+1}, \ldots , J_t$ are to the right. 
Let $\bar{\tau}$ be obtained from $\tau$ by removing the maximum (which is in position $a_j$) and 
reversing the elements in each of the  blocks of ascents and descents that are to the right of $a_j$,
so reversing the elements in positions $[min I_i,a_i]$ for each $i=j+1, \ldots , s$, 
and those in positions $[c_i, \max J_i+1]$ for each $i=k+1, \ldots , t$. Then the map 
$\tau \mapsto \bar{\tau}$ is a bijection between $\{ \sigma \in \D^{I}_{J}(C_{2m,\,+})\,:\, \sm(2m)=a_j\}$ and 
$\D^{\varphi_j(I)}_{\varphi_j (J)} (C_{2m-1})$,
where $\varphi_{j}(I) :=I_{1}\cup \cdots \cup I_{j-1} \cup (I_{j} \setminus \{a_{j} -1\}) \cup (J_{k+1} -1) \cup \cdots \cup (J_{t}-1) $
and $\varphi_{j}(J) :=J_{1}\cup \cdots \cup J_{k} \cup (I_{j+1} -1) \cup \cdots \cup (I_{s}-1) .$

Furthermore, we have  $\ell(\bar{\tau})=\ell(\tau)+A$ and $L(\bar{\tau})=L(\tau)+B$, where,
by Proposition~\ref{baspro}
\small
\begin{align*}
A=& \sum_{r=j+1}^{s}{{{|I_r|+1} \choose 2}}-\sum_{h=k+1}^{t}{{{|J_h|+1} \choose 2}} -(2m-a_j)=\!\sum_{r=j+1}^{s}{b_r(2b_r-1)}-\!\!\sum_{h=k+1}^{t}{d_h(2d_h-1)} -(2m-a_j)\\
& =\sum_{r=j+1}^{s}{b_r(2b_r-3)}-\sum_{h=k+1}^{t}{d_h(2d_h+1)} , \\
B=&\sum_{r=j+1}^{s}{\left({\frac{|I_r|+1}{2}}\right)^2}-
\sum_{h=k+1}^{t}{\left(\frac{|J_{h}|+1}{2}\right)^2} -\frac{2m-a_j}{2} =\sum_{r=j+1}^{s}{b_r (b_r-1)}-\sum_{h=k+1}^{t}{d_h(d_h+1)},
\end{align*}
\normalsize
since $2m-a_j=2\left(\sum_{r=j+1}^s b_r +\sum_{h=k+1}^t d_h\right)$.
Therefore, by our induction hypothesis~\eqref{odd}, 
\begin{eqnarray}\nonumber
\sum_{\substack{ \tau \in \D^{I}_{J} (C_{2m,\,+}): \\  \sm (2m)=a_j }} {(-1)^{\ell(\tau)}x^{L(\tau)}}\!\! &=&\!\!(-1)^A x^{-B} \sum_{\bar\tau \in \D^{\varphi_j(I)}_{\varphi_j (J)} (C_{2m-1})}{(-1)^{\ell(\bar{\tau})}x^{L(\bar{\tau})}} \\ \label{ccc}
&=& (-1)^{\dt} x^{\hat{\alpha}(\varphi_j(J))-B}{\footnotesize
    \left[ \begin{array}{c} m-1\\
b_1,\ldots,b_{j-1},b_j -1, b_{j+1},\ldots,b_s,{\bf d} \end{array} \right]_{x^2}}.\end{eqnarray}
But $
\hat\alpha(\varphi_j(J))= \sum\limits_{r=1}^k{d_r(d_r +1)} +\sum\limits_{r=j+1}^s{b_r(b_r+1)}$, so $\hat\alpha(\varphi_j(J))-B=\hat\alpha(J) + 2\sum\limits_{r=j+1}^s b_r$.
%\sum_{i=1}^{t}{\left(\frac{|J_i|+1}{2}\right)\left(\frac{|J_i|+3}{2}\right)}+\sum_{r=j+1}^{s}{\left[\left(\frac{|I_r|+1}{2}\right)+\left(\frac{|I_r|+1}{2}\right)\right]}\\
%&-&\sum_{i=k+1}^{t}{\left[ \left(\frac{|J_i|+1}{2}\right)\left(\frac{|J_i|+3}{2}\right)-\left(\frac{|J_i|+1}{2}\right)^2 -\left(\frac{|J_i|+1}{2}\right)\right]} .

Thus, the sum in \eqref{ccc} becomes
\[
(-1)^{\dt} x^{\hat\alpha(J)} x^{2\sum_{r=j+1}^{s} b_{r}}{\footnotesize \left[ \begin{array}{c} m-1\\
b_1,\ldots,b_{j-1}, b_j -1, b_{j+1},\ldots,b_s,{\bf d} \end{array} \right]_{x^2}}.
\]
Therefore
\begin{eqnarray*}
\sum_{\sigma \in \D^{I}_{J}(C_{2m,+})}{\!\!\!\!(-1)^{\ell(\sigma)} x^{L(\sigma)}}\! \!&=&\!\! (-1)^{\dt} x^{\hat\alpha(J)} \sum_{j=1}^{s}{{x^2}^{\sum_{r=j+1}^{s}{b_r}}}{\footnotesize\left[ \begin{array}{c} m-1\\
b_1,\ldots,b_{j-1},b_j -1, b_{j+1},\ldots,b_s,{\bf d}  \end{array} \right]_{x^2}} \\
&=&\!\! (-1)^{\dt} x^{\hat\alpha(J)} \frac{[\bt ]_{x^2}}{[m]_{x^{2}}} \footnotesize{ \left[ \begin{array}{c} m \\
\bf b, \,\bf d \end{array} \right]_{x^2}} 
\end{eqnarray*}
as desired.

Under the same hypothesis, for the sum over odd chessboard elements we have, by Proposition~\ref{destoas2}
and Remark~\ref{rmk:res} 
\begin{eqnarray*}
\sum_{\sigma \D^{I}_{J}(C_{2m,-})}{(-1)^{\ell(\sigma)}x^{L(\sigma)}}&=& (-1)^{\ell(w_0)}x^{L(w_0)}  \sum_{\tau \in \D^{J}_{I}(C_{2m,+})}{(-1)^{\ell(\tau)} x^{-L(\tau)}} \\
\\
&=& (-1)^{2m \choose 2} x^{m^{2}} \sum_{\tau \in \D^{J}_{I}(C_{2m,+})}{(-1)^{\ell(\tau)} x^{-L(\tau)}} \\
\\
&=& (-1)^m x^{m^2} (-1)^{\bt} x^{-\sum_{j=1}^{s} {b_j \bar{b}_j}} \frac{[d]_{x^{-2}}}{[m]_{x^{-2}}} {\footnotesize{ \left[ \begin{array}{c} m \\
\bf b , \bf d \end{array} \right]}_{x^{-2}}} \\
\\
&=& (-1)^{\dt}x^{m+\check\alpha(J)}\frac{[\dt]_{x^2}}{[m]_{x^2}} \footnotesize{ \left[ \begin{array}{c} m \\
\bf b, \bf d \end{array} \right]_{x^2}}
%(-1)^{\dt} x^{m} x^{2 \sum_{k=1}^{t}{d_k \choose 2}} x^{ 2\left({m\choose 2} - \sum_{k=1}^{t} {d_k \choose 2} + \sum_{j=1}^{s}{b_j \choose 2}\right)} \frac{[\dt ]_{x^{-2}}}{[m]_{x^{-2}}} \frac{x^{2(m-1-\bt)}}{x^{2(m-1)}},
\end{eqnarray*}
and the result follows. 
This concludes the proof of the first equations in \eqref{evplus} and \eqref{evminus} and hence of the result. \qedhere
\end{proof}

By Lemma~\ref{chdes} the preceding result implies the following one, which computes the sign-twisted generating
function of the odd length over any unmixed descent class.

\begin{thm}
\label{main}
Let $I,J\subseteq [n-1]$ be unmixed. 
Then, keeping the same notation as in Theorem~\ref{mainchess}
\[
\sum_{\sigma \in \D^{I}_{J}(S_n)}(-1)^{\ell (\sigma )}x^{L(\sigma )} =\begin{cases}
(-1)^{\dt} x^{\alpha(J)} \dfrac{x^{\dt} [\bt]_{x^2} + x^{\bt}[\dt]_{x^2}}{[b+d]_{x^2}} 
\footnotesize{ \left[ \begin{array}{c} b+d \\ \bf b, \,\bf d \end{array} \right]_{x^2}}, & \mbox{if $n=2(b+d)$}, \\  &\\
(-x)^{\dt}x^{\alpha(J)} 
\footnotesize{ \left[ \begin{array}{c} b+d \\ \bf b, \,\bf d \end{array} \right]_{x^2}} 
{\displaystyle \prod_{k=2b+2d+2}^{n} (1+(-1)^{k-1} x^{\lfloor \frac{k}{2} \rfloor})}, & \mbox{otherwise.}
\end{cases}   
\]
\end{thm}

\vspace{1em}

\section{Open problems}\label{sec:conj}

In this section we collect some conjectures and open problems
arising from this work.

For $\sigma\in S_n$, we let 
 $\cl_n(\sigma)=\{\tau\in S_n : D_o(\tau)=D_o(\sigma)\}$
denote the equivalence  class of permutations in $S_n$ with the same  odd
diagram as $\sigma$.
Clearly, the  problem of characterizing the odd diagrams is closely related to that of identifying these equivalence classes.

Recall that a permutation $\sigma\in S_n$ is said to \emph{contain the
  pattern}~$\alpha=\alpha_1\cdots \alpha_k$  if there exist $1\leq
i_1<\dots<i_k\leq n$ such that  $\sigma(i_1),\dots,\sigma(i_k)$ are in
the same relative order as  $\alpha_1,\dots,\alpha_k$. 
A permutation $\sigma\in S_n$ is said to \emph{avoid} the pattern~$\alpha$
if it does not contain the pattern $\alpha$.  
We denote with $\Av_n(\alpha)=\{\sigma\in S_n : \sigma \mbox{ avoids
}\alpha\}$  the set of permutations of degree $n$ avoiding~$\alpha$. 
We conjecture that odd diagrams faithfully encode permutations
avoiding some patterns  of length $3$. More precisely,   we conjecture the following.
\begin{con}\label{thm:avoid}
 Let $\alpha\in\{213,312\}$. The map $D_o\colon \Av_n(\alpha)\to
 [n]^2,\, \sigma\mapsto D_o(\sigma)$  is injective. More precisely, for a
 permutation $\sigma \in S_n$,   the class $\cl_n(\sigma)$ contains at
 most  one permutation avoiding the pattern $213$ and at most one avoiding
 $312$.  If they exist, they are respectively the longest and the
 shortest  element of $\cl_n(\sigma)$.
\end{con}

We have verified Conjecture \ref{thm:avoid} for $n \leq 7$.
\vspace{5mm}

In light of Proposition~\ref{pro:einv} and Remark~\ref{rmk:modinv},
it  is natural to investigate the polynomials giving the (non-twisted)
distribution of  the odd inversions. For $n\in \mathbb N$ we denote
this  polynomial by $L_n(x):=\sum_{\sigma\in S_n}x^{L(\sigma)}$.
Properties (iii) and (iv) in Proposition~\ref{baspro} imply that
$L_n(x)$  is monic and symmetric for all $n\in \mathbb{N}$. For small values of $n$ we have: 
\begin{eqnarray*}
L_3(x)&=&1+4x+x^2 \\
L_4(x)&=&1+8x+6x^2+8x^3+x^4 \\
L_5(x)&=&1+12x+23x^2+48x^3+23x^4+12x^5+x^6\\
L_6(x)&=&1+16x+59x^2+137x^3+147x^4+147x^5+137x^6+59x^7+16x^8+x^9
\end{eqnarray*}
With the exception of $n=4$, for $n\leq 11$ the polynomials $L_n(x)$ are unimodal. We therefore conjecture the following.
\begin{con}\label{con:oddunim}
	Let $n\geq 5$. Then the polynomial $L_n(x)$ is unimodal.
\end{con}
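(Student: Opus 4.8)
The plan is to prove symmetric unimodality, exploiting that $L_n(x)$ is palindromic. By Proposition~\ref{baspro}(iii) the map $w \mapsto w w_0$ satisfies $L(ww_0) = L(w_0) - L(w)$, so $L_n(x) = x^{L(w_0)} L_n(1/x)$ is symmetric about $\frac12 L(w_0) = \frac12 \lfloor \frac n2\rfloor\lceil\frac n2\rceil$, and unimodality amounts to symmetric unimodality. The first thing to stress is that the two standard hammers are unavailable. Real-rootedness cannot hold in general, since already $L_4(x) = 1+8x+6x^2+8x^3+x^4$ is not unimodal; and even for $n \geq 5$ log-concavity fails, as $L_5(x) = 1+12x+23x^2+48x^3+23x^4+12x^5+x^6$ has $23^2 = 529 < 576 = 12\cdot 48$. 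Hence the argument must rest on the softer ``same-centre'' calculus: a sum of symmetric unimodal polynomials sharing a common centre of symmetry is symmetric unimodal, and a product of symmetric unimodal polynomials is symmetric unimodal with additive centre. The goal is to realise $L_n(x)$ inside this calculus.

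My primary plan is to find a decomposition of $S_n$ whose blocks have odd-length generating functions that are symmetric and unimodal, all centred at the \emph{same} value $\frac12 L(w_0)$, so that their sum is unimodal. The natural candidates for blocks are the descent classes $\{\sigma : D(\sigma)=D\}$ or the chessboard cosets built from $C_n$. This is motivated by the fact that in the \emph{signed} setting the generating functions over such blocks factor into Gaussian multinomials in $x^2$ (Theorems~\ref{thmA} and~\ref{main}), and Gaussian binomial coefficients $\binom{a+b}{a}_x$ are themselves symmetric and unimodal. I would try to show that the \emph{unsigned} block generating functions likewise admit a Gaussian-product expansion, and then verify that the centres of all these products collapse to the single value $\frac12 L(w_0)$; the same-centre fact would then close the proof.

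A secondary plan, should the blockwise centres fail to align, is to build an $L$-graded symmetric chain decomposition of $S_n$ directly. Because the involution of Proposition~\ref{destoas2} reverses the grading ($L \mapsto L(w_0)-L$), such a decomposition would instantly yield symmetric unimodality. I would construct it by induction on $n$, slicing $S_n$ according to the position $p := \sigma^{-1}(n)$ of the largest value, whose contribution to $L(\sigma)$ is exactly $\lceil \frac{n-p}{2}\rceil$, the number of positions to the right of $p$ of opposite parity.

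The hard part will be the parity shift under deletion. Removing the value $n$ from position $p$ relabels positions $p+1,\dots,n$ as $p,\dots,n-1$ and thereby \emph{flips} their parities, so the residual statistic on $S_{n-1}$ is the odd length taken with respect to a non-alternating two-colouring of the positions rather than $L$ itself; the recursion thus escapes the alternating-parity family. Worse, these generating functions depend on the full colour pattern and not merely on the colour counts: for $n=3$ the pattern $O\,E\,O$ gives $1+4x+x^2$ while $O\,O\,E$ gives $2+2x+2x^2$. This parity shift is precisely what destroys the Lehmer-type product structure enjoyed by the ordinary length and what produces the honest exception at $n=4$. Consequently the induction produces a sum of shifted pieces, and since sums of unimodal polynomials need not be unimodal, the crux is to show that the symmetric-unimodal parts of these pieces share the common centre $\frac12 L(w_0)$, so that the shifts cancel. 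Any successful argument must invoke $n \geq 5$ at exactly this point, since this alignment genuinely fails when $n=4$.
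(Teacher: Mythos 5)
This statement is presented in the paper as a \emph{conjecture}: the authors offer no proof, only the computational evidence that $L_n(x)$ is unimodal for $5\le n\le 10$ (and not for $n=4$). So there is no argument of theirs to compare yours against, and the only question is whether your proposal constitutes a proof on its own. It does not. What you have written is a correct framing of the problem together with two candidate strategies, each of which stops exactly at the point where the actual difficulty begins --- and you say so yourself. Your preliminary observations are all accurate: symmetry of $L_n(x)$ about $\frac12\lfloor\frac n2\rfloor\lceil\frac n2\rceil$ does follow from Proposition~\ref{baspro}(iii); real-rootedness and log-concavity are indeed unavailable ($23^2=529<576=12\cdot 48$ in $L_5$); and the parity flip under deletion of the value $n$, with the resulting dependence on the full colour pattern (your $O\,E\,O$ versus $O\,O\,E$ computation for $n=3$ checks out), is a genuine and correctly diagnosed obstruction.

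The gap is that neither plan is executed, and there are concrete reasons to doubt each as stated. For the first plan, the unsigned generating function of $L$ over a single descent class is in general neither symmetric about $\frac12 L(w_0)$ nor unimodal with that centre: the class $\{\sigma: D(\sigma)=\emptyset\}$ contains only the identity and contributes $1$, centred at $0$, while $\{\sigma: D(\sigma)=S\}$ contributes $x^{L(w_0)}$; the involution $w\mapsto ww_0$ only pairs a class with its complementary class, so at best you get symmetry of \emph{paired} blocks, and you would still need unimodality and centre-alignment of each pair, for which you offer no mechanism. Nothing in Theorem~\ref{thmA} or Theorem~\ref{main} transfers to the unsigned setting: those factorizations rely essentially on sign-cancelling involutions that kill most of each class. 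For the second plan, you concede that the induction leaves the alternating-parity family entirely, so there is no statement to induct on. In short, the conjecture remains open after your proposal, and the ``crux'' you identify --- forcing all the shifted pieces to share the centre $\frac12 L(w_0)$, in a way that uses $n\ge 5$ --- is the entire content of the problem, not a step you have reduced it to.
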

The first rows of the associated triangle are recorded in~\cite[A289511]{Oeis}.

\vspace{5mm}

Generalizing further from Remark~\ref{rmk:modinv}, we let, for $k,n\in \mathbb N$, $h\in \mathbb Z/ k\mathbb Z$ and $\sigma\in S_n$,
    \begin{equation}\label{eq:hmod}
      \inv_{k,h}(\sigma)=|\{(i,j)\in [n]^2 : i<j, \sigma(i)>\sigma(j), j-i\equiv h \pmod k\}|.
    \end{equation}
    Note that $L(\sigma)=\inv_{2,1}(\sigma)$. Also, note that for
    $k\geq n-1$, the polynomials of the distributions of the statistic $\inv_{k,1}$ over $S_n$ coincide with the Eulerian polynomials:
    $$\sum_{\sigma\in S_n} x^{\inv_{k,1}(\sigma)}=\sum_{\sigma\in S_n} x^{\des(\sigma)},$$ where $\des(\sigma)=|\Des(\sigma)|$ denotes the descent number of the permutation $\sigma$.
    Inspired by this fact and  Conjecture~\ref{con:oddunim}, we put forward the following  general conjecture.
    \begin{con}
    \label{1modk}
      The polynomials
      $$\sum_{\sigma\in S_n} x^{\inv_{k,1}(\sigma)}$$
      are unimodal for all $n\in \mathbb N$, and all $k\geq 3$.
      \end{con}
We have verified that Conjecture \ref{1modk}  holds for $n \leq 9$, and all relevant $k$.

\vspace{4mm}

We have seen in Proposition~\ref{zero} some sufficient conditions for the sign-twisted generating function
of the odd length to be zero on a descent class. This, together with the comments following Proposition~\ref{zero}, suggests
 the following natural problem.
\begin{prob}
Let $I,J \subseteq [n]$, $I \cap J = \emptyset$. Give necessary and sufficient conditions on $I$ and $J$ 
such that
\[
\sum_{\sigma \in \D^{I}_{J}(S_n)}(-1)^{\ell (\sigma )}x^{L(\sigma )} =0.
\]
\end{prob}

\begin{acknow} The first author would like to thank 
A. Postnikov, A.\ Rapagnetta, R. Stanley, M. Wachs, and L.\  Williams 
for interesting and useful conversations. 
FB was partially supported by the MIUR Excellence Department Project 
%awarded to the Department of Mathematics, University of Rome Tor Vergata, 
CUP E83C18000100006.
The second named author was partially supported by the German-Israeli 
Foundation for Scientific Research and Development through grant no.~1246.  
AC would also like to thank the Erwin Schr\"odinger International Institute 
for Mathematics and Physics (Vienna), where part of this research was carried 
out.\end{acknow}


\begin{thebibliography}{xx}


\bibitem{BB}
A. Bj\"{o}rner, F. Brenti, {\em Combinatorics of Coxeter groups},
Graduate Texts in Mathematics, {\bf 231}, Springer-Verlag, New York, 2005.

\bibitem{BC}
F.\ Brenti, A.\ Carnevale, {\em Proof of a conjecture of Klopsch-Voll
  on Weyl groups of type $A$},   Trans. Amer. Math. Soc. {\bf 369} (2017), 7531--7547. 

\bibitem{BC2}
F.\ Brenti, A.\ Carnevale, {\em Odd length for even hyperoctahedral
  groups and signed generating functions},  Discrete Math.,  {\bf 340} (2017), 2822--2833.

\bibitem{BC4}
F.\ Brenti, A. Carnevale, {\em Odd length in Weyl groups}, Algebraic Comb., 
{\bf 2} (2019), no. 6, 1125--1147.

\bibitem{GH}
P. Griffiths, J. Harris, {\em Principles of algebraic geometry}, 
Pure and Applied Mathematics,
Wiley-Interscience, New York,  1978.

\bibitem{Har}
R. Hartshorne, {\em Algebraic geometry},
Graduate Texts in Mathematics, {\bf 52}, Springer-Verlag, New York-Heidelberg, 1977.

\bibitem{KV}
B. Klopsch, C. Voll, {\em Igusa-type functions associated to finite
  formed spaces and their functional equations},
Trans. Amer. Math. Soc.,  {\bf 361} (2009), no. 8, 4405--4436.

\bibitem{Turan}
    T. K\"{o}vari, V. T. S\'{o}s,P. Tur\'{a}n, {\em On a problem of
{K}.\ {Z}arankiewicz},  {Colloquium Math.},  {\bf 3 }, (1954), 50--57.   
            
\bibitem{Mac}
 I.G. Macdonald,
{\em Notes on Schubert polynomials}, Publications du LaCIM, Montreal, 1991. 

\bibitem{Oeis}
  N. J. A. Sloane, editor, {\em The On-Line Encyclopedia of Integer
 Sequences},  published electronically at \href{https://oeis.org}{https://oeis.org}, 2019.

%\bibitem{StaEC1}
% R. P. Stanley,
%{\em Enumerative Combinatorics}, vol.1, Second Edition, 
%Cambridge Studies in Advanced Mathematics, no.49,
%Cambridge Univ. Press, Cambridge,  2012.

\bibitem{StaEC2}
R. P. Stanley, {\em Enumerative Combinatorics}, vol.2, 
Cambridge Studies in Advanced Mathematics, no.62,
Cambridge Univ. Press, Cambridge,  1999.

\bibitem{SV1}
A. Stasinski, C. Voll, {\em A new statistic on the hyperoctahedral
  groups},  Electronic J. Combin., {\bf 20} (2013), no. 3, Paper 50, 23 pp. 

\bibitem{SV2}
A. Stasinski, C. Voll, {\em  Representation zeta functions of
  nilpotent groups  and generating functions for Weyl groups of type B}, Amer. J. Math., {\bf 136} (2) (2014), 501--550.

\bibitem{Stem}
  J. Stembridge,
{\em Sign-twisted Poincaré series and odd inversions in Weyl groups}, 
Algebraic Comb., {\bf 2} (2019), no. 4, 621--644.
  
\end{thebibliography}
\end{document}